\documentclass[twoside,12pt]{article}

\usepackage{amsmath}
\usepackage{amssymb}
\usepackage{pxfonts}
\usepackage{graphicx}
\usepackage{eucal}
\usepackage{mathrsfs}
\usepackage{theorem}
\usepackage{pifont}
\usepackage{amscd}

\usepackage{sectsty}
\usepackage{pseudocode}
\usepackage{color}
\usepackage{fancyhdr}
\usepackage{framed}
\usepackage[all]{xy}
\usepackage{hyperref}

\definecolor{shadecolor}{rgb}{0.8,0.8,0.8}

\addtolength{\parskip}{4pt}
\setlength{\oddsidemargin}{0pt}
\setlength{\evensidemargin}{0pt}
\setlength{\parindent}{0pt}


\theoremheaderfont{\fontfamily{pzc}\bfseries\large}
\newtheorem{theorem}{Theorem}[section]
\newtheorem{lemma}{Lemma}[section]
\newtheorem{proposition}{Proposition}[section]

\newtheorem{corollary}{Corollary}[section]
\newtheorem{definition}{Definition}[section]

{\theorembodyfont{\rmfamily}

\newenvironment{proof}{{\flushleft \emph{Proof}:}}{\hfill\ding{110}}

\newcommand{\secref}[1]{Section~\ref{#1}}

\newcommand{\thmref}[1]{Theorem~\ref{#1}}

\newcommand{\propref}[1]{Proposition~\ref{#1}}
\newcommand{\lemref}[1]{Lemma~\ref{#1}}
\newcommand{\corref}[1]{Corollary~\ref{#1}}


\newcommand{\tr}{\operatorname{tr}}
\newcommand{\dist}{\operatorname{dist}}
\newcommand{\strain}[1]{\operatorname{Strain}(#1)}
\newcommand{\strainEuc}[1]{\operatorname{Strain}^{\text{Euc}}(#1)}

\newcommand{\R}{\mathbb{R}}
\newcommand{\Sig}{\Sigma}
\newcommand{\sig}{\sigma}
\newcommand{\GG}{g_I}
\newcommand{\Emph}[1]{{\slshape\bfseries #1}} 
\newcommand{\conn}[2]{{\omega^{#1}}_{#2}}
\newcommand{\G}{g}
\newcommand{\cof}[1]{\vartheta^{#1}}
\newcommand{\inv}{\operatorname{inv}}
\newcommand{\fraka}{\mathfrak{a}}

\newcommand{\til}{\tilde}

\newcommand{\brk}[1]{\left(#1\right)}          
\newcommand{\sAverage}[1]{\langle#1\rangle}      

\newcommand{\beq}{\begin{equation}}
\newcommand{\eeq}{\end{equation}}

\newcommand{\textand}{\quad\text{ and }\quad}
\newcommand{\Textand}{\qquad\text{ and }\qquad}

\providecommand{\half}{\frac{1}{2}}

\newcommand{\distSO}[1]{\text{dist}(#1,\SOn)}
\newcommand{\tildistSO}[1]{\text{dist}_{\til d}(#1,\SOn)}

\newcommand{\IPF}[2]{\sAverage{#1,#2}_F}
\newcommand{\al}{\alpha}
\newcommand{\be}{\beta}
\newcommand{\ga}{\gamma}

\newcommand{\diag}{\text{diag}}

\newcommand{\SOn}{\text{SO}_n}
\newcommand{\On}{\text{O}_n}
\newcommand{\GLp}{\text{GL}_n^+}
\newcommand{\GLm}{\text{GL}_n^-}
\newcommand{\GL}{\text{GL}_n}

\newcommand{\del}{\delta}

\newcommand{\antisym}{\mathfrak{O}_n}
\newcommand{\symn}{\text{sym$_n$}}
\newcommand{\sym}{\text{sym}}
\renewcommand{\skew}{\operatorname{skew}}
\newcommand{\psym}{\text{Psym}_n}

\newcommand{\deuc}{d^{\text{Euc}}}
\newcommand{\geuc}{g^{\text{Euc}}}
\newcommand{\dext}{d^{\text{ext}}}
\newcommand{\dint}{d^{\text{int}}}

\numberwithin{equation}{section}

\begin{document}

\title{On strain measures and the geodesic distance to $\SOn $  in the general linear group}
\author{
Raz Kupferman and 
Asaf Shachar \\
\\
Institute of  Mathematics \\ 
The Hebrew University \\
Jerusalem 91904 Israel}
\maketitle
\tableofcontents

\begin{abstract}
We consider various notions of strains---quantitative measures for the deviation of a linear transformation from an isometry. The main approach, which is motivated by physical applications and follows the work of \cite{NEM15}, is to select a Riemannian metric on $\GL$, and use its induced geodesic distance to measure the distance of a linear transformation from the set of isometries. We give a short geometric derivation of the formula for the strain measure for the case where the metric is left-$\GL$-invariant and right-$\On$-invariant.
We proceed to investigate alternative distance functions on $\GL$, and the properties of their induced strain measures. We start by analyzing Euclidean distances, both intrinsic and extrinsic. Next, we prove that there are no bi-invariant distances on $\GL$. Lastly, we investigate strain measures induced by inverse-invariant distances.   
\end{abstract}


\section{Introduction}

In various physical and mathematical contexts, a natural question arises: how to quantify the distortion of an invertible linear transformation $A\in\GL$? 
That is, how far is $A$ from being an isometry? 
In material science, the local distortion of a map between two manifolds is known as a \Emph{strain measure}.

One can investigate various notions of strain measures.
A natural approach is to choose a distance function $d$ on $\GL$, and define the strain measure as follows:
\[
\strain{A} = \distSO{A}=\inf_{Q \in \SOn} d(A,Q).
\]
Since $\SOn$ is compact, the distance is realized for some $Q\in\SOn$.

In material science, the configuration of a body is a map $f$ from a body manifold $\mathcal{B}$ to a space manifold $\mathcal{S}$. If both manifolds are endowed with Riemannian metrics, then one can define a local strain measure at every point $p$ of the body manifold,
\[
\strain{df} = \distSO{df},
\]
where $\SOn$ here refers to the space of pointwise orientation-preserving isometries. By choosing  orthonormal frames at both $p$ and $f(p)$, invertible linear maps between tangent spaces can be identified with $\GL$, whence the relevance of the proposed framework to general Riemannian settings.

The notion of strain measure depends on the choice of a distance function $d$.
In physical applications, one expects this distance to satisfy certain symmetries with respect to left- and right-multiplication---the former is related to symmetries of the ambient space whereas the latter is related to material symmetries. The most common symmetry assumptions are frame-indifference, which is left-$\On$-invariance, and material isotropy, which is right-$\On$-invariance.

Left- and right-$\On$-invariance do not determine a unique distance on $\GL$, nor do they determine a unique strain measure. The most common distance $d$ is the so-called Frobenius, or Euclidean distance,
\[
\deuc(A,B) = \|A-B\|_F,
\]
where $\|A\|_F^2 = \tr(A^TA)$. The Euclidean distance results in a strain measure given by
\[
\strainEuc{A} = \|\sqrt{A^TA} - I\|_F.
\]

The Euclidean strain measure suffers from well-known drawbacks. From a physical point of view, the main drawback is that $\strainEuc{A}$ remains finite as $A$ tends toward singularity. The Euclidean strain ``penalizes" extreme expansions, but does not ``penalize" extreme contractions. 

The space $\GL$ is a smooth submanifold of the space $M_n$ of $n\times n$ matrices.
Thus, a natural way to define a distance $d$ on $\GL$ is via a Riemannian metric $g$. In this context, the Euclidean distance $\deuc$ is induced by the Euclidean metric on $M_n$,
\[
\geuc_Z(X,Y) = \tr(X^TY), \qquad  \text{where }  \, X,Y \in T_ZM_n \simeq M_n
\]
For $A,B\in M_n$, $\deuc(A,B)$ is the length of the segment $[A,B]$ with respect to the metric $\geuc$. 

A note about terminology:  
to avoid confusion, we will use the term ``distance", rather than ``metric"  in the context of a metric space. The term ``metric" will be reserved for Riemannian metrics.

From a mathematical point of view, a drawback of $\deuc|_{\GL}$ as a distance function on $\GL$ is that it is not an intrinsic distance. Since $\GL$ is not convex, segments $[A,B]$, $A,B\in\GL$ may not be contained in $\GL$. 

The drawbacks of the Euclidean strain measure are at the heart of a series of papers by Neff and co-workers \cite{neff2013logpolar,LNN14,MN14,NEM15}. They endow $\GL$ with a metric that possesses an additional symmetry: in addition to the bi-$\On$-invariance, they assume left-$\GL$-invariance; this is perhaps the most symmetric choice, as it is well-known that there are no bi-invariant metrics on $\GL$.  This additional symmetry restricts drastically  the set of possible metrics. The left-$\GL$ invariance implies that the metric is fully determined by its value at the identity. The addition of right-$\On$-invariance yields a family of metrics depending only on three parameters.

It was shown in \cite{NEM15} that the unique matrix in $\SOn$ that is the closest to $A \in \GLp$ is its orthogonal polar factor $O$, where $A=OP$, with $O \in \SOn$ and $P \in \psym$. Moreover, a closed formula for the strain measure was derived,
\beq 
\strain{A} = \distSO{A} = \|\log \sqrt{A^TA} \|,
\label{eq:Neff_Strain_Formula}
\eeq

where the logarithm of a symmetric positive-definite matrix is its unique symmetric logarithm, and
the norm $\|\cdot\|$ depends on the three parameters mentioned above (see \eqref{eq:expression_inner_product}  below).
This strain measure diverges in singular limits. In particular, it is \Emph{inverse-invariant}, i.e  
\[
\strain{A} = \strain{A^{-1}}.
\]

In this paper we provide an elementary derivation of formula \eqref{eq:Neff_Strain_Formula} for the strain measure.
Using geometric insights, the set of all possible minimizing paths from a given $A \in \GLp$ to $\SOn$ is narrowed considerably. This helps determining the minimal distance in an elementary way.
In particular, our analysis clarifies the different roles played by the various symmetries of the metrics.

In section \ref{sec:symm_geo}, we introduce the family of left-$\GL$- right-$\On$-invariant metrics. We  state a key property satisfied by these metrics---orthogonality relations---which play a central role in the forthcoming analysis. 
We also describe the form of the geodesics. Section \ref{sec:geo_dist} contains the derivation of the corresponding strain measure. In Subsection~\ref{eq:hemitropy_at_identity}, we shed light on the reasons for assuming $\On$-invariance, rather than $\SOn$-invariance, which might have seemed a more natural assumption.

In \secref{sec:Int_vs_Ext}, we turn to analyze extrinsic versus intrinsic distances, first in a general Riemannian setting and then applied to the case of $\GL$ viewed as a submanifold of $M_n$ endowed with the Euclidean metric. The main result is that while the intrinsic distance differs from the extrinsic distance, the strain measures are the same in both cases.
In particular, we give a very short derivation of Grioli's optimality theorem \cite{grioli1940} (see also \cite{neff2013grioli}), which says that for a given $A \in \GLp$, its orthogonal polar factor is the closest matrix to $A$ in $\SOn$ with respect to the Frobenius norm.

In Section~\ref{sec:invariant_distances}, we investigate how other natural symmetries on distance functions affect the strain measure. We start by showing there are no bi-invariant distance functions on $\GL$, hence there is an ``upper limit'' to the amount of symmetries a distance function can possess (see Subsection~\ref{subsec:bi_invariance}).

Next, we show that an inverse-invariant strain measure is obtained if the distance/metric is inverse-invariant. We then describe two different techniques for obtaining such distances/metrics via symmetrization, and analyze the resulting strain measures.
In the case of symmetrizing a distance, we investigate the two families of distances considered thus far: the Euclidean (intrinsic and extrinsic) distance, and the (intrinsic) distances induced by the metrics considered in Section~\ref{sec:symm_geo}. In the case of the Euclidean distance, the result is an improved strain measure, which penalizes expansions and contractions equally. In the other cases, the strain measure is essentially the same as without the symmetrization.

Finally, we discuss the symmetrizations of all the metrics considered in Section~\ref{sec:symm_geo}. The resulting strain measure is also essentially the same as the original. The proof contains an analysis of metrics that are expressed as sums of two metrics,  and also sheds light on the key ingredients in the derivation of the strain measure in Subsection~\ref{subsec:geodesic_dist_diagonal}. 


\section{Symmetries and geodesics}
\label{sec:symm_geo}

\subsection{Left-$\GL$- and right-$\On$-invariant metrics}

Throughout this paper, we use the following notations:

$\GL$ is the group of $n \times n$ invertible real matrices, $\GLp$ and $\GLm$ are the connected components of $\GL$, i.e., $\GLp$ is the subgroup of $n \times n$  invertible matrices with positive determinant, and $\GLm$ is the subset of matrices with negative determinant. We denote by
\[
\On=\{Q \in \GL ~|~ Q^TQ=I \}\subset \GL
\] 
the subgroup of orthogonal matrices, whereas $\SOn\subset \GLp$ is the subgroup of special orthogonal matrices, i.e those with determinant $1$.

We will denote by $M_n$ the vector space of $n \times n$ real matrices, and by $\psym\subset M_n$ the cone of symmetric positive-definite matrices.

For readability, we will try to stick to the following choice of symbols:
\[
\begin{aligned}
& A,B \in \GL \\
& O,U,V \in \On \\
& Q \in \SOn \\
& X,Y \in M_n \\
& P \in \psym.
\end{aligned}
\]

Let $g$ be a left-$\GL$- and right-$\On$-invariant metric on $\GL$. 
A left-invariant metric $g$ on a Lie group $G$ is determined by its restriction at the identity. For $A\in \GL$, let $L_A:\GL \to \GL$ denote left multiplication by $A$, i.e $L_A(B)=AB$. $L_A$ is a diffeomorphism and its differential $(dL_A)_I:T_I\GL \to T_A\GL$ is a vector space isomorphism. For all $X,Y\in T_I\GL$,
\beq
g_I(X,Y) = g_A\brk{(dL_A)_IX,(dL_A)_{I}Y}.
\label{eq:invariance}
\eeq

Since $\GL$ is an open subset of $M_n$, its tangent space at each point is canonically identified with $M_n$ as follows: Given $A \in \GL$, the identification $i_A:M_n \to T_A\GL$ is $i_A(X)=[t \mapsto A+tX]$.

The action of the differential $dL_A$ on a tangent vector at $B$ is 
\[
(dL_A)_Bi_B(X) = (dL_A)_B([B+tX]) = [AB+tAX]  = \iota_{AB}(AX).
\]
Using the above identification,
\[
(dL_A)_B X = AX,
\]
where the dependence of the right-hand side on $B$ is implicit via the identification of $M_n$ with $T_{AB}\GL$.

Substituting this last identify for $B=I$ into \eqref{eq:invariance} we obtain that left-$\GL$-invariance implies, 
\[ 
g_I(X,Y)=g_A(AX,AY) \qquad \forall A \in \GL.
\]

Similarly, right-$\On$-invariance implies
\[ 
g_A(X,Y)=g_{AO}(XO,YO)  \qquad \forall O \in \On.
\]

An immediate consequence of both left- and right-$\On$-invariance, is that $g_I$ is isotropic.
For every $U \in \On$:
\beq
g_I(X,Y) = g_{U^T}(U^TX,U^TY)=g_{U^TU}(U^TXU,U^TYU) = g_I(U^TXU,U^TYU).
\label{eq:isotropy_at_identity}
\eeq
In fact, the same argument shows that for any Lie group $G$ and subgroup $H\subseteq G$, a left-invariant metric $g$ is right-$H$-invariant if and only if  $g_e$ is invariant under conjugation with elements in $H$.

From a representation theorem for isotropic operators ~\cite{boor1985naive}, it follows that there exist constants $\al,\beta\ge0$ and $\ga\le0$, such that
\beq
g_I(X,Y)=\al\tr(X)\tr(Y) + \beta\tr(\sym X\, \sym Y) + \ga\tr(\skew X\,  \skew Y),
\label{eq:expression_inner_product}
\eeq
where $\sym X$ and $\skew X$ denote respectively the symmetric and skew-symmetric parts of $X$. 
%

Let $\sym\subset M_n$ and $\antisym\subset M_n$ denote the subspaces of symmetric and anti-symmetric matrices in $M_n \cong T_I\GL$. The following lemma asserts that these sets are orthogonally complementary with respect to $g_I$:

\begin{lemma}
\label{lem:symmetric_antisym_orthogonal}
Let $g_I$ satisfy the isotropy condition \eqref{eq:isotropy_at_identity}. Then, $\sym$ and $\antisym$ are orthogonally complementary.
\end{lemma}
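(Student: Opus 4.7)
The plan is to reduce the orthogonality claim to a single ``atomic'' case---pairing a diagonal symmetric matrix against a canonical antisymmetric basis element---and then to extract vanishing from a coordinate-aligned reflection in $\On$.

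The algebraic decomposition $M_n = \sym \oplus \antisym$ is trivial from the identity $X = \smallhalf(X+X^T) + \smallhalf(X-X^T)$, so the nontrivial content of the lemma is the orthogonality $\GG(S, W) = 0$ for every $S \in \sym$ and every $W \in \antisym$.

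I would invoke the isotropy condition \eqref{eq:isotropy_at_identity} in two stages. First, using the spectral theorem, pick $V \in \On$ with $V^T S V = D$ diagonal; taking $U = V$ in \eqref{eq:isotropy_at_identity} yields $\GG(S, W) = \GG(D, V^T W V)$, and since $V^T W V$ remains antisymmetric one may assume that $S$ is itself diagonal. Expanding $W$ in the basis $\{F_{ij} := E_{ij} - E_{ji}\}_{i<j}$ of $\antisym$ and using bilinearity reduces the claim to $\GG(D, F_{ij}) = 0$ for each fixed pair $i<j$. Here I would apply \eqref{eq:isotropy_at_identity} a second time with the reflection $U = \diag(1, \ldots, 1, -1, 1, \ldots, 1) \in \On$ having its $-1$ in position $j$. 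Since $U$ is diagonal, $U^T D U = D$; and since only one of the two indices $i, j$ gets its sign flipped, $U^T F_{ij} U = -F_{ij}$. Substituting gives $\GG(D, F_{ij}) = -\GG(D, F_{ij})$, hence $\GG(D, F_{ij}) = 0$.

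The only conceptual obstacle is recognising that the isotropy condition must be invoked with two very different types of elements of $\On$: first a ``generic'' conjugation diagonalising the symmetric direction, then a coordinate-aligned reflection that stabilises the resulting diagonal form while reversing the antisymmetric basis element. Because the reflection used has determinant $-1$, the hypothesis of $\On$-invariance (rather than merely $\SOn$-invariance) is essential here---a point consistent with the emphasis on $\On$-invariance signalled in Subsection~\ref{eq:hemitropy_at_identity}.
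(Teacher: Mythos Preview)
Your argument is correct and takes a different route from the paper. The paper first invokes a representation theorem for isotropic tensors to put $g_I$ into the explicit three-parameter form \eqref{eq:expression_inner_product}, and then reads off the orthogonality of $\sym$ and $\antisym$ by direct substitution into that formula. You bypass the representation theorem entirely, using isotropy only through two well-chosen conjugations---one to diagonalise the symmetric factor, then a coordinate reflection that fixes the resulting diagonal while negating a single $F_{ij}$. Your approach is more self-contained for the lemma in isolation; the paper's is shorter in context because \eqref{eq:expression_inner_product} is needed anyway for the geodesic analysis. One minor point: your claim that full $\On$-invariance is \emph{essential} for this argument is slightly overstated---for $n\ge 3$ one can place a second $-1$ at some diagonal position $k\notin\{i,j\}$ to obtain a matrix in $\SOn$ that still fixes $D$ and negates $F_{ij}$; only when $n=2$ is the determinant-$(-1)$ reflection genuinely unavoidable in your scheme.
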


\begin{proof}
The orthogonality of $\sym$ and $\antisym$ can be shown by an explicit substitution in the form \eqref{eq:expression_inner_product} of the metric, hence $\sym\subseteq\antisym^\perp$. The fact the these subspaces are complementary follows from a dimensional argument,
\[
\dim\sym + \dim\antisym = n^2 = \dim T_I\GL.
\]

\end{proof}

\subsection{Geodesics}

In this section we review the properties of geodesic curves in $(\GL,g)$. 

\begin{proposition}[$g$-geodesics starting at the identity]
\label{prop:geodesics_expression}
Let $g$ be left-$\GL$, right-$\On$-invariant. Let $g_I$ be given by \eqref{eq:expression_inner_product} and denote $\kappa = (\beta - \ga)/2\beta$. 
Let $\gamma:I\to \GL$ be the $g$-geodesic, satisfying the initial conditions
\[
\gamma(0) = I
\Textand
\dot{\gamma}(0) = X_0.
\]
Then,
\[
\gamma(t) =  \exp((1-\kappa)t X_0 + \kappa t X_0^T) \,\exp(\kappa t (X_0-X_0^T)).
\]
\end{proposition}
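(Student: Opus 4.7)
The plan is to verify the proposed formula by checking that it satisfies the geodesic equation for the left-invariant metric $g$. The crucial simplification is that the isotropy of $g_I$ established in \eqref{eq:isotropy_at_identity} reduces the time-dependent ODE to a single algebraic identity at the identity.

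Decompose $X_0 = \sigma + \omega$ with $\sigma = \sym X_0$ and $\omega = \skew X_0$. Set $X_1 = \sigma + (1-2\kappa)\omega$ and $X_2 = 2\kappa\omega$, so the proposed curve reads $\gamma(t) = e^{tX_1} e^{tX_2}$ with $X_1 + X_2 = X_0$, and the initial conditions $\gamma(0) = I$, $\dot\gamma(0) = X_0$ are immediate. Let $R(t) := e^{tX_2}$; antisymmetry of $X_2$ gives $R(t) \in \SOn$, and $X_2$ commutes with $R$. A direct matrix computation then yields
\[
\xi(t) := \gamma(t)^{-1}\dot\gamma(t) = R(t)^T X_0 R(t), \qquad \dot\xi(t) = [\xi(t), X_2].
\]

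Next, I invoke the standard Euler--Arnold (Euler--Poincar\'e) form of the geodesic equation for a left-invariant metric: $\gamma$ is a geodesic with $\gamma(0) = I$ iff $\dot\xi = \operatorname{ad}^*_\xi \xi$, where $\operatorname{ad}^*_\xi : M_n \to M_n$ is the $g_I$-adjoint of $\operatorname{ad}_\xi = [\xi, \cdot]$. The isotropy \eqref{eq:isotropy_at_identity} implies that conjugation by any $R \in \On$ is a $g_I$-isometry, from which one deduces the equivariance
\[
\operatorname{ad}^*_{R^T Y R}(R^T Z R) = R^T (\operatorname{ad}^*_Y Z)\, R.
\]
Combined with $R(t)^T X_2 R(t) = X_2$, the Euler--Arnold equation at arbitrary time $t$ collapses to the single $t$-independent identity $[X_0, X_2] = \operatorname{ad}^*_{X_0} X_0$.

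It remains to verify this algebraic identity using \eqref{eq:expression_inner_product}. Writing a test vector as $Z = \sigma' + \omega'$, I observe that $[\sigma, \omega']$ and $[\omega, \sigma']$ are symmetric while $[\sigma, \sigma']$ and $[\omega, \omega']$ are antisymmetric; by trace cyclicity, $\tr(\sigma[\sigma, \omega']) = \tr(\omega[\omega, \omega']) = 0$. Both $g_I(X_0, [X_0, Z])$ and $g_I([X_0, X_2], Z)$ then reduce to scalar multiples of $\tr(\omega[\sigma, \sigma'])$, and equating the coefficients yields $\ga - \beta = -2\kappa\beta$, i.e., $\kappa = (\beta - \ga)/(2\beta)$, as claimed. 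The main obstacle is the bookkeeping in this last step---signs and commutator structures must be tracked carefully---though the conceptual content of the proof is the reduction of a time-dependent geodesic ODE to a Lie-algebra identity via the isotropy of $g_I$.
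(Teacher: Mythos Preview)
Your proof is correct and takes a genuinely different route from the paper's Appendix~A. The paper proceeds \emph{constructively}: it sets up Cartan's structural equations for the left-invariant orthonormal frame, derives the first-order ODE $\dot X = \kappa(X^TX - XX^T)$ for the body velocity $X(t)=\gamma(t)^{-1}\dot\gamma(t)$, solves this ODE explicitly as $X(t)=\exp(\kappa t(X_0^T-X_0))\,X_0\,\exp(\kappa t(X_0-X_0^T))$, and then integrates the reconstruction equation $\dot\gamma=\gamma X$ to obtain the stated formula. You instead \emph{verify} the formula: you compute $\xi(t)=R(t)^T X_0 R(t)$ with $R(t)=e^{tX_2}\in\SOn$, invoke the Euler--Arnold equation $\dot\xi=\operatorname{ad}^*_\xi\xi$ as a black box, and use the isotropy \eqref{eq:isotropy_at_identity} (so that conjugation by $R(t)$ is a $g_I$-isometry and commutes with $\operatorname{ad}^*$) to collapse the time-dependent check to the single algebraic identity $[X_0,X_2]=\operatorname{ad}^*_{X_0}X_0$ at $t=0$.

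Both approaches bottom out in the same Lie-algebra computation: the paper's Proposition~A.1, $\GG(X,[Y,X])=\kappa\,\GG([X,X^T],Y)$, is exactly the identity you verify in your final paragraph. Your argument is shorter and highlights more transparently \emph{why} the formula works (the body velocity evolves by $\SOn$-conjugation, which the metric does not see); the paper's argument is self-contained (it derives rather than cites the Euler--Arnold equation) and would discover the formula rather than merely confirm it.
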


\begin{proof}
This was proved in \cite{MN14} using an argument based on variations of energy. 
A shorter alternative proof using Cartan's moving frame method is given in Appendix~\ref{sec:geosesic_eqs}.
\end{proof}

\begin{corollary}[$g$-geodesics]
\label{prop:general_geodesics_expression}
Under the same assumptions as above, let $\gamma:I\to \GL$ be the $g$-geodesic satisfying the initial conditions
\[
\gamma(0) = A 
\Textand
\dot{\gamma}(0) = AX_0.
\]
Then,
\[
\gamma(t) =  A \exp((1-\kappa)t X_0 + \kappa t X_0^T) \,\exp(\kappa t (X_0-X_0^T)).
\]
\end{corollary}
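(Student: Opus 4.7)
The plan is to reduce to the case treated in Proposition \ref{prop:geodesics_expression} by exploiting left-$\GL$-invariance of the metric. Since $g$ is left-$\GL$-invariant, for every $A \in \GL$ the map $L_A:\GL \to \GL$ is a Riemannian isometry of $(\GL,g)$. A standard fact from Riemannian geometry says that isometries send geodesics to geodesics, mapping initial conditions via the differential.

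Concretely, I would first let $\tilde\gamma:I\to\GL$ denote the $g$-geodesic with $\tilde\gamma(0)=I$ and $\dot{\tilde\gamma}(0)=X_0$, and then consider the curve $\gamma := L_A\circ\tilde\gamma$. Because $L_A$ is an isometry, $\gamma$ is a $g$-geodesic. Evaluating at $t=0$ gives $\gamma(0)=A\cdot I=A$, and differentiating yields
\[
\dot\gamma(0) = (dL_A)_I \dot{\tilde\gamma}(0) = (dL_A)_I X_0 = A X_0,
\]
where the last equality uses the identification of $T_I\GL$ with $M_n$ together with the formula $(dL_A)_B X = AX$ derived in the discussion preceding \eqref{eq:invariance}. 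Thus $\gamma$ satisfies the prescribed initial conditions, so by uniqueness of geodesics it is \emph{the} geodesic in the statement.

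Substituting the explicit formula from Proposition \ref{prop:geodesics_expression} for $\tilde\gamma(t)$ gives
\[
\gamma(t) = A\,\tilde\gamma(t) = A\exp\bigl((1-\kappa)t X_0 + \kappa t X_0^T\bigr)\exp\bigl(\kappa t(X_0-X_0^T)\bigr),
\]
which is the desired expression. There is essentially no obstacle here: the only point worth verifying carefully is that left-invariance as defined in \eqref{eq:invariance} indeed makes $L_A$ a global isometry (not just an infinitesimal one at $I$), but this is immediate from applying the invariance identity at an arbitrary base point $B$ using the chain rule $(dL_A)_B = (dL_{AB})_I\circ (dL_{B^{-1}})_I^{-1}$ composition, or simply by reading \eqref{eq:invariance} verbatim at the pair $(B,AB)$ in place of $(I,A)$.
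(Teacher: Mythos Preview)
Your proof is correct and follows the same approach as the paper: the paper's proof simply notes that left multiplication is an isometry of $(\GL,g)$ and that isometries map geodesics to geodesics. You have spelled out the details (checking the initial conditions and invoking uniqueness) more explicitly than the paper does, but the argument is identical in substance.
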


\begin{proof}
This follows from the fact that left multiplication is an isometry of $(\GL,g)$. It is a general property of Riemannian manifolds that isometries map geodesics into geodesics.
\end{proof}


\begin{corollary}
\label{cor:geodesic_sym_velocity2}
 Let $\gamma:I\to \GL$ be the $g$-geodesic satisfying the initial conditions
\[
\gamma(0) = Q
\Textand
\dot{\gamma}(0) = QV,
\]
where $Q\in\On$ and $V\in\sym$.
Then,
\[
\gamma(t) =  Q\exp(tV) .
\]
\end{corollary}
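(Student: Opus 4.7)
The plan is to apply Corollary \ref{prop:general_geodesics_expression} directly with the specific data $A = Q \in \On \subset \GL$ and $X_0 = V \in \sym$. Since the initial velocity $QV$ already fits the template $A X_0$ of that corollary, no further setup is needed; I just need to substitute and simplify.

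Substituting gives
\[
\gamma(t) = Q \exp\bigl((1-\kappa)t V + \kappa t V^T\bigr)\,\exp\bigl(\kappa t (V - V^T)\bigr).
\]
Now I use the hypothesis $V \in \sym$, i.e.\ $V^T = V$. The exponent in the first factor collapses to $(1-\kappa)tV + \kappa tV = tV$, and the exponent in the second factor collapses to $0$, so the second exponential is the identity. The result is $\gamma(t) = Q\exp(tV)$.

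There is no real obstacle here, as this is a direct corollary. It is however worth noting two conceptual points that one might flag in the write-up: first, the answer is independent of the parameter $\kappa$, which means this particular family of geodesics is shared by every metric in the left-$\GL$-, right-$\On$-invariant family; second, one could alternatively derive the formula by combining left-$\GL$-invariance (to reduce to a geodesic starting at $I$ with velocity $V$) with \propref{prop:geodesics_expression}, but invoking the already-stated Corollary \ref{prop:general_geodesics_expression} is the most economical path and is what I would present.
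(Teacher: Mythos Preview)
Your proof is correct and is exactly the intended argument: the paper states this corollary without proof, treating it as an immediate specialization of \corref{prop:general_geodesics_expression}, which is precisely what you do. The additional remarks you make about $\kappa$-independence and the alternative derivation via \propref{prop:geodesics_expression} are accurate and harmless.
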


\section{Geodesic distance from $\SOn$}
\label{sec:geo_dist}

Every Lie group endowed with a left-invariant metric is complete as a Riemannian manifold. That is, every geodesic extends indefinitely. This follows from the fact that its isometry group acts transitively; see \cite[p. 154, Example~12]{DOC92}. By the Hopf-Rinow theorem, \cite[p. 146]{DOC92} the length-distance between any two points is realized by a minimizing geodesic.

Generally, there doesn't seem to exist any explicit expression for the (possibly many) geodesics connecting any two elements $A,B\in\GLp$, nor for the resulting distance between these elements. Yet, we are only interested in the distance of an element $A\in\GLp$ from the subgroup $\SOn$ of isometries.  As demonstrated in \cite{NEM15}, an explicit expression can be derived for that distance. In this section we offer a simplified derivation of that expression.

\subsection{Reduction to diagonal positive-definite matrices}

The first step in calculating the distance of $A\in\GLp$ from $\SOn$ is to show that it is sufficient to obtain a formula for diagonal matrices.
The following proposition holds for any bi-$\SOn$-invariant distance on $\GLp$---not necessarily a distance induced by a Riemannian metric.

\begin{proposition}
\label{prop:orthogonal_invariance_dist}
Let $d$ be a  bi-$\SOn$-invariant distance on $\GLp$; we denote the corresponding distance between sets by $\dist$.
Let $A \in\GLp$. If $A= U \Sigma V^T$ is a singular value decomposition (SVD) of $A$ with $U,V \in \SOn$, then 
\[
\distSO{A}=\distSO{\Sig}.
\]
Moreover, if  
$Q$ is a matrix closest to $\Sig$ in $\SOn$, then $UQV^T$ is a matrix closest to $A$ in $\SOn$. 
\end{proposition}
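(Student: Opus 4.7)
The plan is a direct application of the two assumed symmetries. Given the SVD $A = U\Sigma V^T$ with $U,V\in\SOn$, for any $R\in\SOn$ I would first apply left-$\SOn$-invariance (multiplying by $U^T$ on the left of both arguments) and then right-$\SOn$-invariance (multiplying by $V$ on the right) to obtain
\[
d(A,R)\;=\;d(U\Sigma V^T,R)\;=\;d(\Sigma V^T,U^T R)\;=\;d(\Sigma,U^T R V).
\]
This identity is the crux of the argument.

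Next, I would observe that the map $\Phi:\SOn\to\SOn$ defined by $\Phi(R)=U^T R V$ is a bijection, since $\SOn$ is a group and both $U^T$ and $V$ lie in $\SOn$. Taking the infimum over $R\in\SOn$ on both sides of the displayed equality and using that $\Phi$ is a bijection of $\SOn$ onto itself, I conclude
\[
\distSO{A}\;=\;\inf_{R\in\SOn}d(A,R)\;=\;\inf_{R\in\SOn}d(\Sigma,\Phi(R))\;=\;\inf_{R'\in\SOn}d(\Sigma,R')\;=\;\distSO{\Sigma}.
\]

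For the second assertion, the same bijection shows that $R$ realizes the infimum for $A$ if and only if $\Phi(R)$ realizes the infimum for $\Sigma$. Hence, given a minimizer $Q$ for $\Sigma$, the preimage $\Phi^{-1}(Q)=UQV^T$ is a minimizer for $A$, and $d(A,UQV^T)=d(\Sigma,Q)=\distSO{\Sigma}=\distSO{A}$. I do not foresee any substantive obstacle: the only thing to verify is that an SVD with both factors in $\SOn$ exists for $A\in\GLp$, which follows since $\det A>0$ and the singular values are positive, so the signs of a column of $U$ and the corresponding column of $V$ can be flipped simultaneously to force $\det U=\det V=1$.
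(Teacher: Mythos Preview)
Your argument is correct and essentially identical to the paper's: you use bi-$\SOn$-invariance to obtain $d(A,R)=d(\Sigma,U^TRV)$, then exploit that $R\mapsto U^TRV$ is a bijection of $\SOn$ to equate the infima and match the minimizers. Your remark on arranging $U,V\in\SOn$ via a simultaneous column sign-flip also matches the paper's observation.
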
 

\begin{proof}
We first note that for every $A \in \GLp$, there exists an SVD such that $U,V \in \SOn$ (see the comment after the proof of \corref{cor:Expression_for_the_Frobenius_minimizer}). Moreover,  $\Sig$ is unique (up to permutation), i.e., the singular values do not depend on the particular decomposition.

Assuming $U,V \in \SOn$ and using the bi-$\SOn$-invariance,
\beq
\label{eq:orthogonal_distance_invariance}
\begin{split}
\distSO{A} &=
\min_{Q \in\SOn} d(A,Q) =\min_{Q \in\SOn} d(U \Sig V^T,Q) \\
&=  
\min_{Q \in\SOn} d(\Sig, U^TQV)  = \distSO{\Sig}.
\end{split}
\eeq
The last equality holds since $\{U^T Q V~|~Q\in\SOn\} = \SOn$.
Equation \eqref{eq:orthogonal_distance_invariance} implies that $Q \in \SOn$ is a matrix closest to $\Sig$ in $\SOn$ if and only if $UQV^T$ is a matrix closest to $A$ in $\SOn$.
\end{proof}

\subsection{Geodesic distance for diagonal matrices}
\label{subsec:geodesic_dist_diagonal}

By \propref{prop:orthogonal_invariance_dist}, we can focus our attention on finding the distance from $\SOn$ for diagonal positive-definite matrices, $\Sig$. Since $\GLp$ is complete, we look for a minimizing geodesic from $\Sig$ to $\SOn$.
To do so, we are going to exploit the fact that any geodesic minimizing the distance of a point to a submanifold  intersects that submanifold perpendicularly. More precisely:

\begin{lemma}
\label{lem:minimizing_geodesic_orthogonal} 
Let $M$ be a complete Riemannian manifold. Let $S \subseteq M$ be a submanifold, and let $p \in M \setminus S$. Assume $q \in S$ is a point on $S$ satisfying $d(p,q)=\dist(p,S)$ (there is always such a point $q$ if $S$ is compact). Let $\al$ be a minimizing geodesic connecting $p$ and $q$. Then $\al$ is orthogonal to $S$ at $q$.
\end{lemma}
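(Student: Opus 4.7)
The plan is to prove this by contradiction using the first variation of arc length. Suppose that $\alpha$ were \emph{not} orthogonal to $S$ at $q$; then the tangential component of $\alpha'(1)$ (say $\alpha$ is parametrized on $[0,1]$) along $T_q S$ is some nonzero vector. Projecting $\alpha'(1)$ onto $T_q S$ gives a direction along which we can ``slide'' the endpoint $q$ on $S$ to decrease the total length, producing a shorter path from $p$ to $S$ than $\alpha$ itself, contradicting the assumption that $d(p,q) = \dist(p,S)$.

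Concretely, I would first reparametrize $\alpha:[0,L]\to M$ to have unit speed, where $L = d(p,q)$. Let $v\in T_q S$ be the orthogonal projection of $\alpha'(L)$ onto $T_q S$, and assume toward contradiction that $v \neq 0$. Using that $S$ is a submanifold, I would choose a smooth curve $\beta:(-\epsilon,\epsilon)\to S$ with $\beta(0)=q$ and $\beta'(0)=-v$. Next I build a smooth proper variation $\alpha_s:[0,L]\to M$ with $\alpha_0=\alpha$, $\alpha_s(0)=p$, and $\alpha_s(L)=\beta(s)$ (for instance, via $\alpha_s(t) = \exp_{\alpha(t)}(s\,W(t))$ for any smooth vector field $W$ along $\alpha$ with $W(0)=0$ and $W(L)=-v$).

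The key step is invoking the classical first variation formula for arc length,
\[
\frac{d}{ds}\Big|_{s=0} L(\alpha_s) = \langle W(L),\alpha'(L)\rangle - \langle W(0),\alpha'(0)\rangle - \int_0^L \langle W, D_t\alpha'\rangle\,dt.
\]
Because $\alpha$ is a geodesic, $D_t\alpha'\equiv 0$, so the integral vanishes; because $W(0)=0$ the initial boundary term vanishes; and by construction $\langle W(L),\alpha'(L)\rangle = \langle -v,\alpha'(L)\rangle = -\|v\|^2 < 0$. Hence for all sufficiently small $s>0$ the variation satisfies $L(\alpha_s) < L(\alpha) = d(p,q)$. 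Since $\alpha_s$ joins $p$ to $\beta(s)\in S$, this yields $\dist(p,S)\le L(\alpha_s) < d(p,q) = \dist(p,S)$, a contradiction.

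I do not anticipate a genuine obstacle; the argument is standard differential-geometric. The only minor technical care is in choosing the variation field $W$ so that it is smooth, vanishes at $t=0$, and equals $-v$ at $t=L$, and in verifying that $\beta(s)\in S$ for all small $s$ (which follows from $S$ being a smooth submanifold and the existence of local integral curves of a tangent extension of $-v$). Everything else is a direct application of the first variation formula.
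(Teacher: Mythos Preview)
Your proposal is correct; this is exactly the classical first-variation-of-arc-length argument, and the details you outline (choosing a smooth variation field $W$ with $W(0)=0$, $W(L)=-v$, and invoking $D_t\alpha'\equiv 0$) go through without difficulty. The paper itself does not supply a proof of this lemma: it simply defers to an external reference (\cite{243767}). So there is nothing to compare against; your write-up is a self-contained proof where the paper chose to cite one.
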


See \cite{243767} for a proof.

\begin{proposition}
Let $\Sig=\diag(\sig_1,\dots\sig_n)$ be a diagonal matrix with positive entries. Then, 
\[
\distSO{\Sig} = d(\Sig,I).
\]
Moreover, $I$ is the unique element of $\SOn$ minimizing the distance from $\Sig$.
\end{proposition}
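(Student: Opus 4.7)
The plan is to use the characterization of minimizing geodesics to submanifolds (Lemma~\ref{lem:minimizing_geodesic_orthogonal}) together with the explicit form of geodesics emanating from $\SOn$ with symmetric initial velocity (Corollary~\ref{cor:geodesic_sym_velocity2}). The thrust is that any minimizing geodesic from $\Sig$ to $\SOn$ is forced, by orthogonality, to land at a point $Q\in\SOn$ so that $\Sig = Q\exp(V)$ for some $V \in \sym$; uniqueness of the polar decomposition of the positive-definite matrix $\Sig$ then forces $Q=I$ and $\exp(V)=\Sig$.

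First I would invoke completeness of $(\GLp,g)$ (left-$\GL$-invariance makes $g$ complete) and compactness of $\SOn$ to conclude that $\distSO{\Sig}$ is realized by some $Q_*\in\SOn$ and that there exists a minimizing geodesic $\al:[0,1]\to \GLp$ with $\al(0)=\Sig$ and $\al(1)=Q_*$. Applying Lemma~\ref{lem:minimizing_geodesic_orthogonal}, $\al$ meets $\SOn$ orthogonally at $Q_*$. Now the tangent space $T_{Q_*}\SOn$ equals $Q_*\cdot\antisym$, since $\SOn$ is obtained from translating the Lie algebra $\antisym$ by $Q_*$ on the left, and by Lemma~\ref{lem:symmetric_antisym_orthogonal} together with left-$\GL$-invariance of $g$, the $g$-orthogonal complement of $Q_*\cdot\antisym$ inside $T_{Q_*}\GL$ is $Q_*\cdot\sym$. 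Consequently, reversing the orientation of $\al$, the geodesic $\beta(s):=\al(1-s)$ starts at $Q_*$ with initial velocity $Q_*V$ for some $V\in\sym$.

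Next I would apply Corollary~\ref{cor:geodesic_sym_velocity2} to deduce $\beta(s)=Q_*\exp(sV)$, so that at $s=1$,
\[
\Sig = \beta(1) = Q_*\exp(V).
\]
Since $\exp(V)\in\psym$ and $Q_*\in\SOn\subset\On$, this is a polar decomposition of $\Sig$. But $\Sig\in\psym$ already, so $\Sig = I\cdot\Sig$ is its polar decomposition, and by uniqueness of the polar decomposition for invertible matrices we get $Q_*=I$ and $\exp(V)=\Sig$, i.e.\ $V=\log\Sig$. Therefore $\distSO{\Sig}=d(\Sig,Q_*)=d(\Sig,I)$, and $I$ is the unique minimizer.

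The main subtlety I expect is the orthogonality step: one must correctly identify the $g$-normal space to $\SOn$ at $Q_*$ as $Q_*\cdot\sym$, which requires both the characterization of $T_{Q_*}\SOn$ as a left translate of $\antisym$ and the fact that left translation by $Q_*$ is a $g$-isometry (so it preserves the orthogonal splitting of Lemma~\ref{lem:symmetric_antisym_orthogonal}). Once that is in place, the argument reduces to invoking uniqueness of the polar decomposition, which is a standard linear-algebraic fact.
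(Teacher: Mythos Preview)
Your proposal is correct and follows essentially the same approach as the paper: both arguments use completeness to obtain a minimizing geodesic, invoke Lemma~\ref{lem:minimizing_geodesic_orthogonal} for orthogonality at the endpoint in $\SOn$, identify the normal space there as $Q\cdot\sym$ via left-invariance and Lemma~\ref{lem:symmetric_antisym_orthogonal}, apply Corollary~\ref{cor:geodesic_sym_velocity2} to get $\Sig=Q\exp(V)$ with $V$ symmetric, and conclude $Q=I$ by uniqueness of the polar decomposition. The only cosmetic difference is that the paper parametrizes the geodesic starting at $Q$, whereas you start at $\Sig$ and then reverse orientation; this is immaterial.
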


\begin{proof}
Let $Q \in \SOn$ satisfy
\[
d(\Sig,Q)=\distSO{\Sig}.
\]
By the completeness of $\GLp$, there exists a minimizing geodesic
$\al:[0,1] \to \GLp$ from $Q$ to $\Sig$, i.e., 
\[ 
\al(0)=Q, \quad 
\al(1)=\Sig
\textand 
L(\al)=\|\dot\al(0)\|_{Q}=d(\Sig,Q). 
\]
\lemref{lem:minimizing_geodesic_orthogonal} implies that $\dot \al(0) \perp T_Q\SOn$. 

Denoting $\dot\al(0)=QV$ (where we think of $V$ as an element of $T_I\GLp \cong M_n$, and $QV$ is identified with $d(L_Q)_I(V)$), we obtain for any antisymmetric matrix $X \in \antisym=T_I\SOn$:
\[
g_I(X,V) = g_Q(QX,QV) = g_Q(QX,\dot\al(0)) = 0,
\]
where the last equality is valid since $d(L_Q)_I(T_I\SOn)=T_Q\SOn$, hence $QX=d(L_Q)_I(X) \in T_Q\SOn$.

Thus, $V\in T_I\GLp$ is orthogonal to every anti-symmetric matrix, and by  \lemref{lem:symmetric_antisym_orthogonal}, $V\in\sym$. 
It follows from \corref{cor:geodesic_sym_velocity2} that
\[
\al(t)=Qe^{tV}. 
\]
By the definition of $\al$, $\al(1)=\Sig=Qe^V $. Since $V$ is symmetric, $e^V$ is symmetric positive-definite, hence we obtain two polar decompositions of $\Sig$,
\[
\Sig = I \Sig 
\Textand
\Sig = Q  e^V. 
\]
By the uniqueness of polar decomposition for invertible matrices, we conclude that $Q=I$, which completes the proof.
\end{proof}

We proceed to derive an explicit formula for the distance of $\Sig$ from $\SOn$. 
Since $Q = I$, 
\[
e^V = \Sig = e^{\log \Sig},
\] 
where $\log \Sig = \diag(\log \sig_i)$. Since $V$ and $\log \Sig$ are symmetric, and since the matrix exponential is injective on the space of symmetric matrices, it follows that  $V = \log \Sig$.
Hence,
\[
\distSO{\Sig}=  \|\dot{\al}(0)\|_I = \|V\|_I=\| \log \Sig\|_I. 
\]

%
%
%
%

Substituting the explicit form \eqref{eq:expression_inner_product} of the metric $g_I$,
\beq
\distSO{\Sig}= \sqrt{\al\brk{\sum \log\sig_i}^2+\beta \sum \brk{\log \sig_i}^2}.
\label{eq:explicit_dist_Sig_SO}
\eeq
As a corollary, we get that $\al(t)=e^{t\log \Sig}$ is the unique minimizing geodesic connecting $I$ to $\Sig$.


\subsection{Geodesic distance for arbitrary matrices}

Let $A \in \GLp$ be an arbitrary matrix.
If $A=U\Sig V^T $ is an SVD of $A$, then $\sqrt{A^TA}=V\Sig V^T$, hence $\log\sqrt{A^TA}=V \, \log \Sig \, V^T$.
By \propref{prop:orthogonal_invariance_dist},
\beq
\label{eq:explicit_dist_general_matrix_SO} 
\distSO{A}=\| \log \Sig\|_I = \| V \, \log \Sig \, V^T\|_I=\|\log\sqrt{A^TA}\|_I,
\eeq
where the second equality follows from the invariance \eqref{eq:isotropy_at_identity}. Whenever we write $\log$ of a symmetric positive-definite matrix, we refer to its unique symmetric logarithm. Since the exponential map is a diffeomorphism from $\symn$ to $\psym$ there is no ambiguity here.
We have thus obtained an explicit expression for the distance of any matrix $A\in\GLp$ from $\SOn$ by elementary means.

We have shown that for a diagonal positive-definite matrix $\Sig$, $Q=I$ is the unique element in $\SOn$ satisfying $\distSO{\Sig} = d(\Sig,Q)$. By \propref{prop:orthogonal_invariance_dist}, if $A = U\Sig V^T$ is an SVD of $A\in\GLp$ with $U,V\in\SOn$, then
$UV^T$ is the unique matrix in $\SOn$ that is closest to $A$.

Moreover:

\begin{corollary}[The orthogonal polar factor is the minimizer]
\label{cor:Expression_for_the_Frobenius_minimizer}
Let  $A \in \GLp$.
Let $A=OP$ be the polar decomposition of $A$, $O \in \SOn$ and $P \in \psym$. 
Then $O$ is the matrix closest to $A$ in $\SOn$. 
\end{corollary}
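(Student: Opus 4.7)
The plan is straightforward: combine Proposition~\ref{prop:orthogonal_invariance_dist} with the diagonal case just established, and then identify the resulting minimizer as the orthogonal polar factor. All the real work has already been done in the previous subsections; what remains is essentially linear-algebraic bookkeeping.

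First I would justify the claim (promised in the proof of Proposition~\ref{prop:orthogonal_invariance_dist}) that $A\in\GLp$ admits an SVD $A = U\Sig V^T$ with both $U,V\in\SOn$. A standard SVD gives $U,V\in\On$; since $\det A > 0$ and $\det\Sig > 0$, one has $\det U = \det V$. If both equal $-1$, simultaneously negating the last column of $U$ and of $V$ preserves the product $U\Sig V^T$ and turns both determinants into $+1$.

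Next I would invoke Proposition~\ref{prop:orthogonal_invariance_dist} together with the result of the previous subsection. The proposition says that the closest element of $\SOn$ to $A$ is $UQV^T$, where $Q\in\SOn$ is closest to $\Sig$; the diagonal analysis of Subsection~\ref{subsec:geodesic_dist_diagonal} identifies this $Q$ as the identity. Hence the unique closest element of $\SOn$ to $A$ is $UV^T$.

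It then remains to identify $UV^T$ with the orthogonal polar factor $O$. Since $A^T A = V\Sig^2 V^T$ and $V\Sig V^T\in\psym$, uniqueness of the symmetric positive-definite square root yields $P = \sqrt{A^T A} = V\Sig V^T$. Therefore
\[
A P^{-1} = U\Sig V^T \cdot V\Sig^{-1} V^T = UV^T,
\]
and by uniqueness of the polar decomposition this product equals $O$. The only mildly delicate point in the whole argument is the first step—ensuring that one can choose an SVD with both orthogonal factors in $\SOn$ rather than merely in $\On$—and this is resolved by the sign-flipping trick above.
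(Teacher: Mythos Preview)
Your proof is correct and follows essentially the same route as the paper: use Proposition~\ref{prop:orthogonal_invariance_dist} together with the diagonal case to conclude that the closest rotation is $UV^T$, and then identify $UV^T$ with the polar factor $O$. The only cosmetic difference is in the identification step: the paper starts from the polar decomposition $A=OP$, orthogonally diagonalizes $P=\tilde U\Sigma\tilde U^T$ to \emph{produce} an SVD with $U=O\tilde U$, $V=\tilde U$, so that $UV^T=O$ is immediate; you instead start from a generic SVD and recover $O$ via $AP^{-1}=UV^T$ and uniqueness of the polar decomposition. Both directions of this bookkeeping are equally short and equally valid.
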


\begin{proof}
By orthogonally diagonalizing $P$ with $P = \til U \Sigma \til U^T$, we obtain an SVD, 
\[
A = O\til U\Sigma \til U^T=U \Sigma V^T,
\] 
where $U=O\til U$ and $V=\til U$. 
Note that by interchanging two columns if necessary, we can assume $\til U \in \SOn$, hence $V,U \in \SOn$.
By the above discussion,  $UV^T=O\til U \til U^T = O$ is the matrix closest to $A$.
\end{proof}

Please note: the above argument shows that for $A \in \GLp$ there always exists an SVD where both orthogonal matrices are in $\SOn$.

\subsection{$\On$ versus $\SOn$-invariance}

The analysis presented in Sections~\ref{sec:symm_geo} and \ref{sec:geo_dist} assumes that the metric $g$ is left-$\GL$- and right-$\On$ invariant.
Since we are interested in intrinsic distances in $\GLp$ from the subgroup $\SOn$, it may seem as if we could perform the whole analysis in $\GLp$ rather than in $\GL$. In such case, it only makes sense to require the Riemannian metric to be left-$\GLp$ and right-$\SOn$ invariant---right $\On$-invariance, for example, is meaningless. A natural question is the following: would we obtain the same geodesic distances and the same strain measures if we considered left-$\GLp$ and right-$\SOn$-invariant metric on $\GLp$?

An inner-product $g_I$ satisfying condition \eqref{eq:isotropy_at_identity} is called \textbf{isotropic}. In contrast, an inner-product $g_I$ satisfying
\beq
g_I(X,Y)=g_I(S^TXS,S^TYS), \qquad \forall S \in \SOn
\label{eq:hemitropy_at_identity}
\eeq
is called \textbf{hemitropic}.
If every hemitropic inner-product is isotropic, then our entire analysis extends as is to $\SOn$-invariant metrics on $\GLp$. 
If, however, isotropy and hemitropy are not equivalent, then our analysis has to be revisited, as 
the representation of the inner-product \eqref{eq:expression_inner_product} relies explicitly on the isotropic nature of the inner-product $g_I$.

Our analysis relies on the specific form  \eqref{eq:expression_inner_product} of the inner-product $g_I$ in two crucial aspects: (i) in the derivation of an explicit formula for the geodesics, and (ii) in obtaining the orthogonality of symmetric and anti-symmetric matrices. Since an inner-product is of the form \eqref{eq:expression_inner_product} if and only if it is isotropic, any hemitropic, but non-isotropic inner-product is not of that form, hence our analysis is not applicable.

It turns out that for all dimensions $n \neq 4$, there are no hemitropic non-isotropic inner-products. For odd $n$ this is trivial to see since $-I \in \On \setminus \SOn$ commutes with every other matrix. The analysis for even dimensions is less trivial. A proof can be found in ~\cite{229549} .
Thus, our work holds as is with isotropy replaced by hemitropy in any dimension other than $4$. 
Understanding the implications of an hemitropy assumption for $n=4$ remains an open question.

\section{Intrinsic versus extrinsic distances}
\label{sec:Int_vs_Ext}

Endowing $\GLp$ with distances induced by Riemannian metrics is one type of choice for 
measuring the distortion of a linear map. Another popular choice is the distance induced by the Frobenius inner-product on $M_n$, or equivalently, the Euclidean metric on $M_n$ identified with $\R^{n^2}$. In fact, one of the motivations in \cite{NEM15} for considering distances induced by Riemannian metrics was the claimed inadequacy of the Euclidean metric. Note that the Euclidean metric gives rise to two distinct distances on $\GLp$: (i) an \Emph{extrinsic} distance, obtained by restricting the Euclidean distance function to the subset $\GLp$ of $M_n$, and (ii) an \Emph{intrinsic} length-distance determined by paths in $\GLp$.

In this section we explore the distinction between extrinsic and intrinsic distances, first in a general Riemannian context, and second, in the original Euclidean context.

\subsection{The general  Riemannian case}

Let $(M,g)$ be a Riemannian manifold. Denote the induced Riemannian distance function by $d^M$. Let $S \subset M$ be an embedded connected submanifold.

There are two natural ways to induce a distance on $S$:
\begin{enumerate}
\item Intrinsic: Consider $S$ as a Riemannian submanifold of $M$, i.e., endow $S$ with the pullback metric $i^*g$ along the inclusion $i:S\to M$. Denote by $\dint_S$ the Riemannian distance function induced by $i^*g$.

\item Extrinsic: Consider $S$ as a subspace of the metric space $(M,d^M)$. Denote by  $\dext_S$  the restriction of $d^M$ to $S \times S$.
\end{enumerate}

An immediate observation is that $\dint_S \ge \dext_S$. The question we pose is under what conditions, $\dint_S = \dext_S$.

In general, both equality and inequality may hold: 
For $M=\mathbb{S}^2$ endowed with the round metric and $S$ a great circle, $\dint_S = \dext_S$.
For $M=\mathbb{R}^2$ with the standard Euclidean metric and $S=\mathbb{S}^1$, $\dint_S > \dext_S$.

To state our results we need the following classical definitions:

\begin{definition}
Let $(M,g)$ be a Riemannian manifold.
A subset $C$ of $M$ is said to be a \Emph{geodesically convex} if, given any two points in $C$, there is a minimizing geodesic (in $M$)  contained within $C$ joining these two points.
\end{definition}

\begin{definition}
A submanifold $S$ of a Riemannian manifold $(M,g)$ is called \Emph{totally geodesic} if any geodesic on the submanifold $S$ with its induced Riemannian metric is also a geodesic on the Riemannian manifold $(M,g)$. (This condition is equivalent to the vanishing of the second fundamental form of $S$ in $M$.)
\end{definition}

To prove our results we shall need the following lemma which roughly says that  paths that are close to being length-minimizers are within a narrow tubular neighborhood of a (minimizing) geodesic. 
 
\begin{lemma}[Nearly length-minimizing paths are close to geodesics]
\label{lem:nearly_length_minimizing_close_geodesics}
Let $(M,g)$ be a complete Riemannian manifold, and let $p,q \in M$. 
Then, for any $\epsilon>0$ there exists a $\delta>0$ (possibly dependent on $p$ and $q$) such that any path $\alpha$ joining $p$ and $q$ satisfying
\[
L(\alpha) < d(p,q) + \delta
\]

is in an $\epsilon$-neighborhood of a minimizing geodesic $\gamma:I\to M$ joining $p$ and $q$.
In particular, there exists a reparametrization $\alpha \circ \varphi :I \to M$ of $\alpha$ satisfying
\[
\sup_{t \in I} d\brk{\alpha \circ \varphi(t),\gamma(t)} < \epsilon.
\]
\end{lemma}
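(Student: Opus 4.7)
The plan is to argue by contradiction via an Arzel\`a--Ascoli compactness argument. Suppose the conclusion fails: there exist $\epsilon_0 > 0$ and a sequence of paths $\alpha_n$ joining $p$ and $q$ with $L(\alpha_n) < d(p,q) + 1/n$ such that for every reparametrization $\alpha_n \circ \varphi$ and every minimizing geodesic $\gamma : I \to M$ from $p$ to $q$, one has $\sup_{t\in I} d(\alpha_n \circ \varphi(t), \gamma(t)) \geq \epsilon_0$. The goal is to produce a subsequence whose constant-speed reparametrizations converge uniformly to a minimizing geodesic, contradicting this assumption.

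First I would fix $I = [0,1]$ and reparametrize each $\alpha_n$ proportionally to arclength to obtain $\tilde\alpha_n : [0,1] \to M$ with $|\dot{\tilde\alpha}_n| \equiv L(\alpha_n)$ a.e.\ (valid for any rectifiable curve). The family $\{\tilde\alpha_n\}$ is then uniformly Lipschitz with constant at most $d(p,q) + 1$, hence equicontinuous. Since $\tilde\alpha_n(0) = p$ and $L(\tilde\alpha_n) \leq d(p,q) + 1$, each image lies in the closed ball of radius $d(p,q)+1$ around $p$; by completeness of $(M,g)$ and the Hopf--Rinow theorem, this ball is compact. Thus the family $\{\tilde\alpha_n\}$ is both equicontinuous and pointwise relatively compact.

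Next I would apply the Arzel\`a--Ascoli theorem to extract a subsequence $\tilde\alpha_{n_k}$ converging uniformly to a Lipschitz curve $\gamma_\infty : [0,1] \to M$ with $\gamma_\infty(0) = p$ and $\gamma_\infty(1) = q$. Invoking the standard lower semicontinuity of Riemannian length with respect to uniform convergence yields $L(\gamma_\infty) \leq \liminf_k L(\tilde\alpha_{n_k}) = d(p,q)$, so $\gamma_\infty$ is length-minimizing. As a uniform limit of constant-speed curves with speeds converging to $d(p,q)$, it is itself of constant speed $d(p,q)$, and any such constant-speed length minimizer is a genuine minimizing geodesic. For $k$ sufficiently large, $\sup_t d(\tilde\alpha_{n_k}(t),\gamma_\infty(t)) < \epsilon_0$, contradicting the standing hypothesis and completing the proof.

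The principal technical obstacle is the lower semicontinuity of length under uniform convergence, which requires choosing refining partitions of $[0,1]$ and exploiting compactness of the trace of $\gamma_\infty$; a secondary delicate point is verifying that the uniform limit of constant-speed parametrizations retains constant speed and that a length-minimizing constant-speed path is a smooth geodesic (rather than merely a rectifiable minimizer), which follows by local uniqueness of minimizing geodesics in normal coordinate neighborhoods. Once these ingredients are in place, the contradiction---and with it the $\epsilon$-neighborhood statement, which is an immediate weakening---follows.
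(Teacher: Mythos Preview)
Your proposal is correct and follows essentially the same route as the paper: a contradiction argument via constant-speed reparametrization, Arzel\`a--Ascoli compactness in a closed Hopf--Rinow ball, and lower semicontinuity of length to identify the uniform limit as a minimizing geodesic. Your version is in fact slightly more careful than the paper's, which glosses over why the limit curve is already parametrized proportionally to arclength.
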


In this lemma there is no submanifold, so the distance has only one possible meaning--- the Riemannian distance on $M$.

\begin{proof}
Assume by contradiction that the claim is false. Denote $r=d(p,q)$.
Then, there exists an $\epsilon > 0$ and a sequence of paths $\alpha_n:I \to M$ joining $p$ and $q$, satisfying 
\[
L(\alpha_n) \le d(p,q) + \frac{1}{n}, 
\]
and $\alpha_n$ is not in an $\epsilon$-neighborhood of any minimizing geodesic.

Since $L(\alpha_n)\to r$, we can assume $L(\al_n) \le 2r$, thus $\operatorname{Image}(\alpha_n) \subseteq \bar  B^M(p,2r)$ (the closed ball of radius $2r$ around $p$). By the completeness of $M$, it follows from the Hopf-Rinow theorem that $\bar B^M(p,2r)$ is compact.

Reparametrize $\al_1$ by arclength, i.e assume $I=[0,L(\al_1)]$, and that $\al_1:I \to M$ has a constant speed. For every $n \in \mathbb{N}$,
reparametrize $\al_n:I \to M$ such that it has a constant speed $c_n=\| \dot \al_n \|$. Then 
\[ 
2L(\al_1) \ge 2r \ge L(\al_n)=c_nL(\al_1) \quad\Rightarrow\quad c_n \le 2,
\] 
which implies that the $\al_n$ are equicontinuous, since \[d(\al(t),\al(s))\le L(\al_{[t,s]})\le 2(s-t). \] 

By the Arzela-Ascoli theorem, there exists a subsequence (also denoted $\alpha_n$) converging uniformly to a path $\alpha:I \to M$.
By the lower-semicontinuity of the length functional we deduce:
\[ 
L(\alpha) \le \lim_{n \to \infty} L(\alpha_n) = d(p,q).
\]
This implies that $\alpha$ is a length-minimizing curve between $p$ and $q$,  hence its reparametrization by arclength $\alpha \circ \varphi$  is a geodesic.

Finally, the uniform convergence $\alpha_n \to \alpha$ yields a contradiction: there exists an $N$ such that for all $n>N$,
\[
  \sup_{t\in I} d\brk{(\alpha_n \circ \varphi)(t),(\alpha \circ \varphi)(t)} < \epsilon.
\]

\end{proof}

We next prove the following:

\begin{proposition}
\label{prop:riemannian_intrinsic}
Let $S$ be a submanifold $S$ of a Riemannian manifold $(M,g)$. Then:
\begin{enumerate}
\item If $S$ is a geodiscally convex subset of $M$, then $\dint_S = \dext_S$. The reverse implication does not hold in general.
The next assertion shows that the only obstruction for the reverse direction to hold, is topological.
\item If $S$ is topologically closed in $M$, then $\dint_S = \dext_S$ if and only if  $S$ is a geodesically convex subset of $M$.
\item If $\dint_S = \dext_S$ then $S$ is a totally geodesic submanifold of  $M$. The reverse implication does not hold in general.
\item   Let $p,q \in S$. Assume there exists a \textbf{unique} minimizing geodesic $\gamma:I\to M$ connecting  $p$ and $q$. If $\gamma \cap (M\setminus \bar S) \neq \emptyset$ , Then $\dint_S(p,q) > \dext_S(p,q)$.

\end{enumerate}
\end{proposition}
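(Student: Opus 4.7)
The plan is to tackle the four parts in order, with \lemref{lem:nearly_length_minimizing_close_geodesics} as the common technical engine.

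For Part~1's forward direction, given $p, q \in S$, geodesic convexity supplies a minimizing $M$-geodesic $\gamma$ from $p$ to $q$ whose image lies in $S$, so $\dint_S(p,q) \le L(\gamma) = d^M(p,q) = \dext_S(p,q)$, matching the trivial reverse inequality. The converse fails on $S = \R^2 \setminus \{0\} \subset M = \R^2$: the straight segment from $(1,0)$ to $(-1,0)$ passes through the origin (so $S$ is not geodesically convex), yet detours of length $2 + \epsilon$ around $0$ give $\dint_S = \dext_S = 2$; note $S$ is not closed in $M$, consistent with Part~2. For Part~3, assume $\dint_S = \dext_S$, fix $p \in S$ and $v \in T_p S$, and let $\gamma_S$ be the $(S, i^*g)$-geodesic with $\gamma_S(0) = p$, $\dot\gamma_S(0) = v$; for small $|t|$ it is uniquely minimizing in $S$, so $L(\gamma_S|_{[0,t]}) = \dint_S(p, \gamma_S(t)) = d^M(p, \gamma_S(t))$, forcing it to be length-minimizing in $M$ and hence an $M$-geodesic. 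The converse of Part~3 fails on a proper open arc $S$ of a great circle in $M = \mathbb{S}^2$: $S$ is totally geodesic, yet for two points of $S$ close to a removed endpoint on opposite sides, $\dint_S$ is large while $\dext_S$ is small.

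For Part~2's nontrivial direction, fix $p, q \in S$ and choose paths $\alpha_n : I \to S$ joining $p$ and $q$ with $L(\alpha_n) \to \dint_S(p, q) = d^M(p, q)$. Reparametrizing by arclength as in the proof of \lemref{lem:nearly_length_minimizing_close_geodesics}, the $\alpha_n$ lie in a common compact ball (by completeness of $M$) and are equicontinuous, so an Arzela-Ascoli subsequence converges uniformly to some $\alpha : I \to M$; lower semicontinuity of length gives $L(\alpha) \le d^M(p, q)$, making $\alpha$ a minimizing curve, hence (up to reparametrization) a minimizing $M$-geodesic. Since each $\alpha_n \subseteq S$ and $S$ is closed in $M$, the image of $\alpha$ lies in $S$, proving geodesic convexity.

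For Part~4, suppose toward contradiction that $\dint_S(p, q) = \dext_S(p, q) = d^M(p, q)$, and pick paths $\alpha_n \subseteq S$ from $p$ to $q$ with $L(\alpha_n) \to d^M(p, q)$. Choose $t_0 \in I$ with $\gamma(t_0) \in M \setminus \bar S$ and $\epsilon > 0$ so the open ball $B(\gamma(t_0), \epsilon)$ lies inside $M \setminus \bar S$. By \lemref{lem:nearly_length_minimizing_close_geodesics}, for $n$ large some reparametrization of $\alpha_n$ is $\epsilon$-close to a minimizing $M$-geodesic joining $p$ and $q$; uniqueness of $\gamma$ identifies that geodesic with $\gamma$ itself, so the reparametrized $\alpha_n$ enters $B(\gamma(t_0), \epsilon) \subseteq M \setminus \bar S$, contradicting $\alpha_n \subseteq S$. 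The hardest step is the limit extraction in Part~2: guaranteeing that the limiting curve stays in $S$ relies jointly on closedness of $S$ and completeness of $M$ (needed for the Arzela-Ascoli compactness), without which the argument collapses.
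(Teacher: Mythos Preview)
Your proof is correct and follows essentially the same route as the paper: the Arzel\`a--Ascoli limit extraction for Part~2, the local length-minimization argument for Part~3, and the use of \lemref{lem:nearly_length_minimizing_close_geodesics} for Part~4 are all exactly what the paper does. The only cosmetic differences are that your Part~3 counter-example is an arc of a great circle in $\mathbb{S}^2$ (the paper uses $\mathbb{S}^1\setminus\{p\}$ in $\mathbb{S}^1$, which is the same idea one dimension down), and that you phrase Part~4 as a proof by contradiction whereas the paper argues directly and obtains the slightly sharper quantitative conclusion $\dint_S(p,q)\ge \dext_S(p,q)+\delta$.
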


The last two statements hold also if we replace the existence of a unique geodesic with the existence of finitely many geodesics.

%
%
%
%
%
%
%
%
%
%
%

\begin{proof}
\begin{enumerate}
\item The fact that geodesic convexity implies equality of the distances is immediate. A counter-example for the reverse implication is $M=\R^2,S=\R^2 \setminus {(0,0)}$.

\item 
Assume $d^{Int}_S = d^{Ext}_S$. Let $p,q \in S$. Let $\alpha_n:I \to S$ be a sequence of paths satisfying $L(\alpha_n) \to d(p,q)$. Then, by a similar argument to the one in the proof of \lemref{lem:nearly_length_minimizing_close_geodesics}, there is a subsequence $\alpha_n$ converging uniformly to a path $\alpha$. By the lower-semicontinuity of the length, $L(\alpha)=d(p,q)$, so $\alpha$ is minimizing, hence it is a reparametrization of a geodesic. 
Closedness of $S$ implies $\operatorname{Image}(\alpha) \subseteq S$.
\item 
Suppose that $\dint_S = \dext_S$, and let $\alpha$ be a geodesic in $S$. Then, it is locally length-minimizing in $S$, and for small enough $t$,  
\[
L(\alpha|_{[0,t]})=\dint_S(\alpha(0),\alpha(t))=\dext_S(\alpha(0),\alpha(t))=d^M(\alpha(0),\alpha(t)). 
\]
So $\alpha|_{[0,t]}$ is length-minimizing path between $\alpha(0),\alpha(t)$ in $M$ with parameter proportional to arc length, hence it is a geodesic in $M$. 

A counter-example for the reverse implication is $M=\mathbb{S}^1,S=\mathbb{S}^1 \setminus \{p\}$ (where $p$ is an arbitrary point in $\mathbb{S}^1$). We will see another counter-example in the next section: $M=M_n$ and $S=\GLp$.

\item By assumption, there exists $t_0 \in I$ such that $\gamma(t_0) \in M\setminus \bar S$. Since $M\setminus \bar S$ is open, there is some open ball of $d^M$-radius $\epsilon$, $\gamma(t_0) \in B_{\epsilon} \subset M\setminus \bar S$.

By \lemref{lem:nearly_length_minimizing_close_geodesics}, $\exists \delta > 0$ such that if $\alpha$ is a path between $p,q$, $L(\alpha) < d^M(p,q) + \delta$ then $\alpha$ is in an $\epsilon$-neighborhood of some minimizing geodesic joining $p$ and $q$. By our assumption, there is only one minimizing geodesic between $p$ and $q$ in $M$, namely $\gamma$.

Thus, there exists a reparametrization of $\alpha$, $\alpha \circ \varphi :I \to M$, such that for every $t$,
\[
d\big((\alpha \circ \varphi)(t),\gamma(t)\big) < \epsilon.
\]
In particular, $d\big((\alpha \circ \varphi)(t_0),\gamma(t_0)\big) < \epsilon$ implies that $(\alpha \circ \varphi)(t_0) \in B_{\epsilon} \subseteq M\setminus \bar S$.

This shows that any path $\alpha$ which is $\delta$-close to being a minimizer intersect $M\setminus \bar S$. Hence $d^{Int}_S(p,q) \ge d^{Ext}_S(p,q)  + \delta$.

\end{enumerate}
\end{proof}


\subsection{Euclidean distances in $\GLp$}
\label{subsec:intrinsic_euc}

Next, we consider the particular case where the Riemannian manifold is the vector space of $n\times n$ matrices endowed with the Euclidean metric. This is the case considered classically in the context of elastic strain measures, and whose shortcomings has motivated, in part, the consideration of alternative measures of strain.

We start with a few definitions:

\begin{definition}[Euclidean metric on $M_n$]
We denote by $(M_n,\deuc)$ the space of $n\times n$ real matrices endowed with the Euclidean distance. Note that the distance $\deuc$ can be derived from a Riemannian metric $g$ given by
\[
g_Z(X,Y) = \tr(X^TY).
\]
\end{definition}

\begin{definition}[Extrinsic Metric on $\GLp$]
We denote by $(\GLp,\dext)$ the metric space of $n\times n$ invertible matrices with positive determinant, 
where $\dext$ is the restriction of $\deuc$, with $\GLp$ viewed as a subset of $\R^{n^2}$.
%
\end{definition}

\begin{definition}[Intrinsic Metric on $\GLp$]
Consider  $ \, (\GLp,g|_{\GLp}) \,$ as an open sub-manifold of the Riemannian manifold $(M_n,g)$.
That is, we endow $\GLp$ with the pullback metric $i^* g$ of the Euclidean metric $g$ along the inclusion $ i:\GLp \to (M_n,g)$.
%
We denote by $\dint$ the distance function induced by the Riemannian metric $i^*g$.
\end{definition}

We first observe that $\dint > \dext$ for some pair of matrices. Indeed, for any $X,Y\in M_n$, the unique minimizing geodesic is the segment 
\[
[X,Y] = \{X+t\,(Y-X) ~:~ t\in[0,1]\}.
\]
Since the sub-manifold $\GLp$ is not convex, there exist $A,B\in\GLp$, such that the segment $[A,B]$ intersects $\GLm$. By Item~4 in \propref{prop:riemannian_intrinsic},
\[
\dext(A,B) < \dint(A,B).
\]

However, we note the following:

\begin{lemma}
\label{lem:equality_int_ext_convexity}
Let $A,B \in \GLp$. If $[A,B] \subset \GLp$, then $\dint(A,B)=\dext(A,B)$.
\end{lemma}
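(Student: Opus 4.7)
The plan is to establish the two inequalities $\dint(A,B)\le\dext(A,B)$ and $\dint(A,B)\ge\dext(A,B)$ separately. The first uses the hypothesis that the segment $[A,B]\subset\GLp$, while the second is a general fact about intrinsic versus extrinsic distances on any submanifold of a Riemannian manifold (the inequality $\dint_S\ge\dext_S$ noted right before \propref{prop:riemannian_intrinsic}).

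For the upper bound $\dint(A,B)\le\dext(A,B)$, I would exploit directly the assumption that the straight segment $\alpha(t)=A+t(B-A)$, $t\in[0,1]$, lies entirely inside $\GLp$. Since the intrinsic metric on $\GLp$ is literally the restriction of the Euclidean metric on $M_n$, the length of $\alpha$ computed with $i^*g$ agrees with the length of $\alpha$ computed with $g$ in $M_n$, which is $\|B-A\|_F=\dext(A,B)$. As $\alpha$ is an admissible path in $\GLp$, we conclude $\dint(A,B)\le L(\alpha)=\dext(A,B)$.

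For the lower bound $\dint(A,B)\ge\dext(A,B)$, I would observe that any smooth path $\gamma:[0,1]\to\GLp$ joining $A$ and $B$ is, via the inclusion $i:\GLp\hookrightarrow M_n$, also a path in $M_n$ joining $A$ and $B$, and
\[
L_{i^*g}(\gamma)=L_g(i\circ\gamma)\ge \deuc(A,B)=\dext(A,B),
\]
because the Euclidean distance in $M_n$ is the infimum of lengths of paths between two points. Taking the infimum over all such $\gamma$ yields $\dint(A,B)\ge\dext(A,B)$, and combined with the upper bound the claim follows.

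I do not expect any real obstacle here: the argument is entirely elementary and does not rely on geodesic convexity of $\GLp$ (indeed $\GLp$ is not convex in $M_n$). The content of the hypothesis $[A,B]\subset\GLp$ is precisely that the unique minimizing geodesic of the ambient space $M_n$ between $A$ and $B$ is an admissible curve in the submanifold, which is exactly what is needed to upgrade the generic inequality $\dint\ge\dext$ to an equality. Note that this lemma gives a concrete instance of Item~1 of \propref{prop:riemannian_intrinsic} applied locally to the pair $\{A,B\}$ rather than to a subset of $\GLp$.
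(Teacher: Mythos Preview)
Your proof is correct and follows exactly the same idea as the paper, which simply states that the claim is obvious because $[A,B]$ is an extrinsic length-minimizing path that stays in the submanifold. You have merely spelled out the two inequalities in detail, but the content is identical.
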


\begin{proof}
This is obvious, since $[A,B]$ is an extrinsic length-minimizing path which stays in the submanifold. 
\end{proof}

We next observe that both right- and left-multiplications by elements of $\SOn$ are isometries of $(GL_n^+,i^*g)$. Hence, they are isometries of the metric space $(GL_n^+,\dint)$ (any Riemannian isometry is an isometry of the induced distance function). It follows that $\dint$ is both left- and right-$\SOn$ invariant.

In particular, let $A = U\Sigma V^T$ be an SVD of $A\in\GLp$.
By \propref{prop:orthogonal_invariance_dist},
\[
\dint(A,\SOn) = \dint(\Sigma,\SOn),
\] 
hence, as before, the problem of computing the distance of $A\in\GLp$ from $\SOn$ (and finding the minimizer) 
can be reduced to positive-definite diagonal matrices $\Sig$.
We now give a short proof that $I$ is the unique matrix closest to $\Sig$ with respect to the extrinsic distance, that is,
\[
\dext(\Sig,\SOn) = \dext(\Sig,I) = \|\Sig-I \|_F.
\]
Indeed, since  
\beq
\|A - Q\|_F^2  = \|{A}\|_F ^2 + \|{Q}\|_F^2 - 2\IPF{A}{Q} = \|{A}\|_F^2 + n - 2\tr(A^TQ),
\eeq
it follows that given $A \in M_n$, minimizing $\|A-Q\|_F^2$ over $Q \in \SOn$ is equivalent to maximizing the linear functional  $\varphi_A(Q)=\IPF{A}{Q}=\tr(A^TQ)$.
For diagonal and positive-definite $\Sig$,

\[ 
\varphi_{\Sig}(Q) = \mathrm{tr}(\Sig^TQ) = \sum_{i = 1}^n \sig_i Q_{ii} \leq \sum_{i=1}^n \sig_i = \mathrm{tr}(\Sig^T) = \varphi_{\Sig}(I),
\]
where the inequality follows from the fact that $Q$ is orthonormal, hence $|Q_{ij}|\le1$.
The unique maximizer is $Q = I$, hence $I$ is the unique matrix closest to $\Sig$.

%


\begin{theorem}
\label{thm:coincidence_of_minimizer_ext_int}
Let $\Sig \in\GLp$ be a diagonal matrix with strictly positive entries on the diagonal. Then the unique minimizer of the intrinsic distance of $\Sig$ from $\SOn$ is $I$, and 
\[
\dint(\Sig,\SOn) = \dext(\Sig,\SOn).
\]
\end{theorem}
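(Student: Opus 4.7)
The plan is to leverage the two pieces of structure already established: (i) $I$ is the unique extrinsic minimizer of the distance from $\Sigma$ to $\SOn$, and (ii) \lemref{lem:equality_int_ext_convexity}, which promotes extrinsic equality to intrinsic equality whenever the straight segment between the two matrices lies in $\GLp$. Everything else should reduce to a sandwich of inequalities.

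First I would check that the Euclidean segment $[I,\Sigma]$ is contained in $\GLp$. Since $\Sigma=\diag(\sigma_1,\ldots,\sigma_n)$ with $\sigma_i>0$, the segment consists of diagonal matrices with entries $1+t(\sigma_i-1)=(1-t)+t\sigma_i$, which are strictly positive for all $t\in[0,1]$ as convex combinations of positive numbers. Hence $[I,\Sigma]\subset\GLp$, and \lemref{lem:equality_int_ext_convexity} gives
\[
\dint(\Sigma,I)=\dext(\Sigma,I).
\]

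Next, I would combine this with the universal inequality $\dext\le\dint$ and with the already established fact that $\dext(\Sigma,\SOn)=\dext(\Sigma,I)$ with $I$ as the unique extrinsic minimizer. This yields the chain
\[
\dext(\Sigma,\SOn)=\dext(\Sigma,I)=\dint(\Sigma,I)\ge\dint(\Sigma,\SOn)\ge\dext(\Sigma,\SOn),
\]
so every inequality is an equality. This simultaneously proves the distance identity $\dint(\Sigma,\SOn)=\dext(\Sigma,\SOn)$ and shows that $I$ is an intrinsic minimizer.

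For uniqueness, suppose $Q\in\SOn$ also satisfies $\dint(\Sigma,Q)=\dint(\Sigma,\SOn)$. Then
\[
\dext(\Sigma,Q)\le\dint(\Sigma,Q)=\dint(\Sigma,\SOn)=\dext(\Sigma,\SOn),
\]
so $Q$ minimizes the extrinsic distance as well. By the uniqueness of the extrinsic minimizer (shown via the trace maximization argument preceding the theorem), $Q=I$. There is no real obstacle here; the only substantive ingredient is the convexity of the positive orthant of diagonal matrices, which makes the segment $[I,\Sigma]$ stay in $\GLp$ and thus allows the intrinsic and extrinsic pictures to coincide along this particular path.
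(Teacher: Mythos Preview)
Your proof is correct and follows essentially the same approach as the paper: the paper also invokes the convexity of the segment $[I,\Sigma]$ (phrased as ``$\psym$ is closed under convex combinations'') to apply \lemref{lem:equality_int_ext_convexity}, then runs the identical sandwich of inequalities, and proves uniqueness by the same reduction to the extrinsic minimizer. The only cosmetic difference is that you spell out the diagonal-entry computation explicitly.
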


\begin{proof}
Since $\psym$ is closed under convex combinations, it follows that $[\Sig,I] \subseteq GL_n^+$.
By \lemref{lem:equality_int_ext_convexity}, it follows that $\dint(\Sig,I)=\dext(\Sig,I)$.

The inequality $\dint \ge \dext$ implies that
\[
\dist^{int}(\Sig,\SOn) \ge \dist^{ext}(\Sig,\SOn).
\] 
Hence,
\[ 
\dist^{ext}(\Sig,\SOn) = \dext(\Sig,I)=\dint(\Sig,I) \ge \dist^{int}(\Sig,\SOn) \ge \dist^{ext}(\Sig,\SOn).
\]
So, $I$ is a minimizer and $\dist^{int}(\Sig,\SOn) = \dist^{ext}(\Sig,\SOn)$.
  
The uniqueness of the minimizer for $\dist^{int}$ follows from the uniqueness of the minimizer for $\dist^{ext}$:
Let $Q \in \SOn$ be a minimizer for the distance $\dist^{int}$ of $\Sig$ from $\SOn$.
Then, 
\[
\dist^{ext}(\Sig,\SOn) =  \dist^{int}(\Sig,\SOn)=\dint(\Sig,Q) \ge  \dext(\Sig,Q),
\]
hence $Q$ is also a minimizer for  $\dist^{ext}$, and by the uniqueness of the extrinsic minimizer, $Q = I$.
\end{proof}

\begin{corollary}
Let $A \in \GLp$. Then 
\[ 
\dint(A,\SOn) = \dext(A,\SOn), 
\] 
and the closest matrix to $A$ in $\SOn$ with respect to both distances is the same---it is the orthogonal polar factor of $A$. Moreover,
\beq
\dint(A,\SOn) = \|\sqrt{A^TA} - I\|_F.
\label{eq:grioli}
\eeq

\end{corollary}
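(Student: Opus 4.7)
The plan is to reduce to the diagonal positive-definite case by singular value decomposition, using that both distances $\dext$ and $\dint$ are bi-$\SOn$-invariant, and then to invoke \thmref{thm:coincidence_of_minimizer_ext_int} together with a direct Frobenius-norm computation.

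First, I would pick an SVD $A = U\Sig V^T$ with $U,V \in \SOn$; such a decomposition exists by the remark following \corref{cor:Expression_for_the_Frobenius_minimizer}. The extrinsic distance $\dext$ is bi-$\On$-invariant since the Frobenius norm is, and the intrinsic distance $\dint$ was shown to be bi-$\SOn$-invariant in the paragraph preceding \thmref{thm:coincidence_of_minimizer_ext_int}. Applying \propref{prop:orthogonal_invariance_dist} to both distances gives
\[
\dint(A,\SOn) = \dint(\Sig,\SOn) \textand \dext(A,\SOn) = \dext(\Sig,\SOn),
\]
and moreover, if $Q_0 \in \SOn$ realizes the distance of $\Sig$ from $\SOn$ (for either distance), then $U Q_0 V^T$ realizes the distance of $A$ from $\SOn$.

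Next, by \thmref{thm:coincidence_of_minimizer_ext_int}, the unique minimizer for $\Sig$ under the intrinsic distance is $I$, and $\dint(\Sig,\SOn) = \dext(\Sig,\SOn)$. The uniqueness of the extrinsic minimizer was established in the discussion preceding that theorem. Combining these with the previous step yields $\dint(A,\SOn) = \dext(A,\SOn)$, and in both cases the unique closest matrix is $U I V^T = UV^T$. To identify this with the orthogonal polar factor, observe that $A = (UV^T)(V\Sig V^T)$ is the polar decomposition of $A$, since $UV^T \in \SOn$ and $V\Sig V^T \in \psym$; therefore $UV^T = O$ is precisely the orthogonal polar factor.

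For the explicit formula \eqref{eq:grioli}, I would compute directly from the SVD: $\sqrt{A^TA} = V\Sig V^T$, hence
\[
\|\sqrt{A^TA} - I\|_F = \|V(\Sig - I)V^T\|_F = \|\Sig - I\|_F,
\]
where the last equality uses the orthogonal invariance of the Frobenius norm. On the other hand, the already-computed extrinsic distance is $\dext(\Sig,I) = \|\Sig - I\|_F$, so combining gives $\dint(A,\SOn) = \|\sqrt{A^TA} - I\|_F$. No step in this plan presents a real obstacle; the entire corollary is a direct assembly of \propref{prop:orthogonal_invariance_dist}, \thmref{thm:coincidence_of_minimizer_ext_int}, and orthogonal invariance of $\|\cdot\|_F$, the only thing to be careful about being the identification of $UV^T$ with the orthogonal polar factor.
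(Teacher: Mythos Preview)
Your proposal is correct and follows essentially the same approach as the paper: reduce to the diagonal case via \propref{prop:orthogonal_invariance_dist} applied to both distances, invoke \thmref{thm:coincidence_of_minimizer_ext_int}, and identify $UV^T$ with the orthogonal polar factor. Your argument is in fact more explicit than the paper's (which simply cites these results and refers back to the proof of \corref{cor:Expression_for_the_Frobenius_minimizer} for the polar-factor identification), and your direct verification that $A=(UV^T)(V\Sig V^T)$ is the polar decomposition, together with the Frobenius computation for \eqref{eq:grioli}, fills in details the paper leaves to the reader.
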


\begin{proof}
This is an immediate consequence of
\propref{prop:orthogonal_invariance_dist} applied to both distances, together with \thmref{thm:coincidence_of_minimizer_ext_int}.
The fact that the orthogonal polar factor is the unique minimizer follows from the same considerations as in the proof of \corref{cor:Expression_for_the_Frobenius_minimizer}.
\end{proof}


\section{Additional results concerning invariant distances}
\label{sec:invariant_distances}
\subsection{Bi-invariance}
\label{subsec:bi_invariance}
As always in mathematics, the most symmetric structures are the most easy to handle. 
In \cite{NEM15}, the authors consider either left- and right-$\GL$-invariance as natural requirements on a metric on $\GL$. Their choice is motivated by physical considerations.

A natural question is: does there exist a distance function on $\GL$ that is more symmetric than the ones we have considered? In this section we show here that there are no bi-invariant distance functions on $\GL$ that are compatible with the standard topology. (The fact there is no bi-invariant Riemannian metric is common knowledge.)


\begin{theorem}
\label{thm:no_bi_invariant metric}
There is no bi-invariant distance function on $GL_n$ generating the standard topology on $GL_n$ (the subspace topology induced by the inclusion $GL_n \to \R^{n^2}$).
\end{theorem}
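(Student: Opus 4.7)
My plan is to derive a contradiction from the assumption that a bi-invariant distance $d$ on $\GL$ induces the standard topology. The first step is to observe that bi-invariance implies conjugation-invariance: for any $A, C \in \GL$, applying right-invariance first and then left-invariance (or vice versa) gives $d(CAC^{-1}, I) = d(CA, C) = d(A, I)$. Consequently, the function $A \mapsto d(A,I)$ is constant on every conjugacy class of $\GL$.

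The strategy is then to exhibit a single element $A \ne I$ together with a sequence of conjugators $C_k \in \GL$ such that $C_k A C_k^{-1} \to I$ in the standard topology of $M_n$, while $A$ itself is bounded away from $I$. Once such a sequence is produced, the assumption that $d$ generates the standard topology forces $d(C_k A C_k^{-1},I) \to 0$, while conjugation-invariance forces $d(C_k A C_k^{-1},I) = d(A,I) > 0$, a contradiction.

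For the concrete construction I would take the unipotent element
\[
A = I + E_{12},
\]
where $E_{12}$ denotes the matrix with a single $1$ in position $(1,2)$ and zeros elsewhere. Then $A \in \GL$ and $A \ne I$, so $d(A,I) > 0$. Setting $C_k = \diag(1/k, 1, 1, \dots, 1) \in \GL$, a direct computation gives
\[
C_k A C_k^{-1} = I + \tfrac{1}{k} E_{12},
\]
which converges to $I$ in the standard topology of $M_n$ as $k \to \infty$. Since $C_k A C_k^{-1}$ lies in $\GL$ for every $k$ and the limit $I$ also lies in $\GL$, the convergence takes place in the subspace topology on $\GL$, i.e., in the standard topology to which $d$ is assumed equivalent. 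This yields the desired contradiction.

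I do not anticipate a genuine obstacle: the only subtle point is ensuring that the scaling conjugation keeps us inside $\GL$ (which it does, as $C_k \in \GL$), and that the displacement $\tfrac{1}{k} E_{12}$ survives as a nonzero perturbation before rescaling (which is why a nilpotent perturbation of $I$ is the right choice—scalar matrices have trivial conjugacy classes and would not work). For $n \geq 2$ the construction is immediate; for $n = 1$ the group $\GL = \R^\times$ is abelian, but in that case bi-invariance is the same as left-invariance, and the standard argument that any left-invariant distance on $\R^\times$ generating the standard topology would make it isometric to a translation-invariant distance on $\R$ via $\log|\cdot|$ shows there is no obstruction, so the theorem as stated is meaningful for $n \ge 2$, which is the case of interest.
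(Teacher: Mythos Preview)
Your proof is correct and follows essentially the same approach as the paper: both arguments exploit that bi-invariance forces $d(\cdot,I)$ to be constant on conjugacy classes, and then exhibit a unipotent matrix whose conjugacy class (under diagonal conjugation) accumulates at $I$. The paper uses $A = I + E_{12} + E_{23} + \cdots + E_{n-1,n}$ and $D = \diag(2^{-1},\ldots,2^{-n})$, while you use the simpler pair $A = I + E_{12}$ and $C_k = \diag(1/k,1,\ldots,1)$; the underlying idea is identical, and your observation that the statement requires $n \ge 2$ is a useful addendum.
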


\begin{proof}
The essential point is the existence of a non-trivial conjugacy class whose closure contains the identity.
Assume, by contradiction, there is a bi-invariant distance function $d$ compatible with the standard topology. 
Consider the following matrices:

\[
D=\begin{pmatrix}
2^{-1} & 0 &  \cdots & 0 \\
 0 & 2^{-2} & \cdots & 0 \\
\vdots  & \vdots& \ddots & \vdots \\
0 & 0 & \cdots & 2^{-n}    
\end{pmatrix}
\Textand
A=\begin{pmatrix}
1 & 1 & 0 & \cdots & 0 \\
0 & 1 & 1 &  \cdots & 0 \\
0 & 0 & 1& \ddots & 0 \\
 \vdots  & \vdots& \vdots & \ddots & \vdots & \\  
0 & 0 & 0& \cdots  & 1 
\end{pmatrix}.
\]
An explicit calculation yields,
\[
D^{-n}AD^{n}=\begin{pmatrix}
1 & 2^{-n} & 0 & \cdots & 0 \\
0 & 1 & 2^{-n} & \cdots & 0 \\
0 & 0 & 1& \ddots & 0 \\
\vdots  & \vdots& \vdots & \ddots & \vdots & \\  
0 & 0 & 0& \cdots  & 1 
\end{pmatrix}.
\]

%
%
%

Since $d$ generates the standard topology on $GL_n$, 
it follows that 
\[
  \lim_{n\to\infty} d(D^{-n}AD^{n},I) = 0.
\]
On the other hand, bi-invariance implies that for every $n$,
\[
  d(D^{-n} A D^{n},I) = d(D^{-n} A D^{n},D^{-n} I D^{n})  = d(A,I) \ne 0,
\]
which is a contradiction.
\end{proof}

As an immediate corollary we obtain the classical result:

\begin{corollary}
There is no bi-invariant Riemannian metric on $\GL$.
\end{corollary}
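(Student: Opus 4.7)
The plan is to quote Theorem~\ref{thm:no_bi_invariant metric} directly: I would show that a bi-invariant Riemannian metric $g$ on $\GL$ induces a bi-invariant distance function generating the standard topology, and then the theorem supplies the contradiction. Concretely, suppose for contradiction such a $g$ exists. Because $\GL$ is disconnected, I would first pass to the identity component $\GLp$ to get a well-defined, finite-valued Riemannian distance $d$, noting that the restriction of $g$ is still invariant under left and right multiplication by every element of $\GLp$, and that the matrices $D$, $A$ appearing in the proof of Theorem~\ref{thm:no_bi_invariant metric} both have positive determinant ($\det D = 2^{-n(n+1)/2}$, $\det A = 1$), so the same conjugation argument is available inside $\GLp$.

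The two facts to verify are standard. First, bi-invariance of $d$: for every $A \in \GLp$ the left and right translations $L_A, R_A$ are smooth isometries of $(\GLp, g)$, since their differentials preserve $g$ by assumption; therefore they preserve the length of every piecewise-smooth curve, and passing to the infimum yields $d(AB,AC) = d(B,C)$ and $d(BA,CA) = d(B,C)$. Second, the topology: on any connected Riemannian manifold the length-distance metrizes the manifold topology (this follows from comparing $d$ with the Euclidean distance in normal coordinate charts), and on $\GLp$ the manifold topology coincides with the subspace topology from $\R^{n^2}$. Hence $d$ generates the standard topology on $\GLp$.

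With these two facts in hand, the conjugation argument of Theorem~\ref{thm:no_bi_invariant metric} applies verbatim: bi-invariance gives $d(D^{-k} A D^{k}, I) = d(A, I) > 0$ for every $k$, but $D^{-k} A D^{k} \to I$ in the standard topology forces the left side to $0$, a contradiction. I do not anticipate a genuine obstacle; the only subtlety is the disconnectedness of $\GL$ (handled by restricting to $\GLp$) and the invocation of the general fact that Riemannian distance generates the manifold topology. This is exactly why the statement is labelled an immediate corollary rather than a theorem in its own right.
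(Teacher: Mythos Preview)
Your proposal is correct and matches the paper's intent: the corollary is stated with no explicit proof beyond the phrase ``as an immediate corollary,'' and your argument---pass from a bi-invariant metric to its induced bi-invariant length-distance generating the manifold (hence standard) topology, then invoke Theorem~\ref{thm:no_bi_invariant metric}---is exactly the intended route. Your care with the disconnectedness of $\GL$, restricting to $\GLp$ and observing that the matrices $D$ and $A$ in the theorem's proof already lie there, is a legitimate refinement that the paper glosses over.
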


\subsection{Inverse-invariance}

When it comes to physical applications, a major drawback of the Euclidean strain measure is that it does not diverge in the limit where the linear map is singular.
From a physical viewpoint, we expect a strain measure $\distSO{A}$ to diverge when $A$ either tends to infinity (expansion), or when it tends toward singularity (contraction). The strain measure is said to be \Emph{inverse-invariant} if
\beq
\strain{A}=\strain{A^{-1}}, 
\label{eq:contraction_expansion}
\eeq
for every $A \in \GLp$.

As noted in the Introduction, the strains obtained via the metrics considered in \secref{sec:symm_geo} are all inverse-invariant.
The essential reason behind this phenomenon, is the extreme symmetry of these metrics. Specifically, we have the following very general assertion:

\begin{proposition}
\label{prop:inverse_invariance_general_Lie_groups}
Let $G$ be a group and $H \subseteq G$ a subgroup. Let $d$ be a left-$G$- right-$H$-invariant distance function on $G$. Then,
\[
\dist(g,H) = \dist(g^{-1},H).
\]

Moreover, if $h$ is a closest element to $g$ in $H$, then $h^{-1}$ is a closest element to $g^{-1}$ in $H$.
\end{proposition}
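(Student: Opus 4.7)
The plan is to first establish a pointwise identity: for every $g \in G$ and every $h \in H$,
\[
d(g^{-1}, h^{-1}) = d(g, h),
\]
and then deduce both claims by taking infima and using that $h \mapsto h^{-1}$ is a bijection $H \to H$.

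To prove the pointwise identity, I would apply left-$G$-invariance followed by right-$H$-invariance in a two-step computation. First, left-multiplying both entries by $g$ gives
\[
d(g^{-1}, h^{-1}) = d(g g^{-1}, g h^{-1}) = d(e, g h^{-1}),
\]
where $e$ denotes the identity of $G$. Next, right-multiplying both entries by $h \in H$ (which is allowed by right-$H$-invariance) gives
\[
d(e, g h^{-1}) = d(e \cdot h, g h^{-1} \cdot h) = d(h, g) = d(g, h),
\]
using symmetry of the distance in the last step. Chaining the two equalities yields the desired identity.

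For the first claim, since $H$ is a subgroup, the map $h \mapsto h^{-1}$ is a bijection of $H$ to itself, so
\[
\dist(g^{-1}, H) = \inf_{h \in H} d(g^{-1}, h^{-1}) = \inf_{h \in H} d(g, h) = \dist(g, H).
\]
For the second claim, if $h_0 \in H$ realizes $d(g, h_0) = \dist(g, H)$, then the identity gives $d(g^{-1}, h_0^{-1}) = d(g, h_0) = \dist(g, H) = \dist(g^{-1}, H)$, so $h_0^{-1}$ is a closest point of $H$ to $g^{-1}$.

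I do not anticipate any real obstacle here: everything follows from a direct bookkeeping of the two invariance properties together with the group-theoretic fact that inversion permutes $H$. The only mild subtlety is remembering to invoke symmetry of $d$ at the end of the computation (to turn $d(h, g)$ into $d(g, h)$), which is automatic since $d$ is assumed to be a distance function.
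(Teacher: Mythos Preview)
Your proof is correct and essentially identical to the paper's: both establish the pointwise identity $d(g,h)=d(g^{-1},h^{-1})$ for $h\in H$ via one application of left-$G$-invariance and one of right-$H$-invariance, then pass to the infimum using that $h\mapsto h^{-1}$ is a bijection of $H$. The only cosmetic difference is that the paper starts from $d(g,h)$ and left-multiplies by $g^{-1}$, whereas you start from $d(g^{-1},h^{-1})$ and left-multiply by $g$.
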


\begin{proof}
Using the assumed invariances,
\[ 
\dist(g,H) = \inf_{h \in H} d(g,h) = \inf_{h \in H} d(e,g^{-1}h) = \inf_{h \in H} d(h^{-1},g^{-1})= \dist(g^{-1},H) 
\]

The equality $d(g,h)=d(g^{-1},h^{-1})$ (for $h \in H$) implies the correspondence between closest elements to $g$ and $g^{-1}$.
\end{proof}

Note how this observation implies, without any computation, that certain strain measures are inverse-invariant  (the actual form of the strain measures is irrelevant).

Another means for obtaining inverse-invariant strain measures is to consider distance functions $d$ on $\GL$ that are inverse-invariant, i.e.,
\beq
\label{eq:inverse_symmetric_distance}
d(A,B)=d(A^{-1},B^{-1}).
\eeq
Indeed, the inverse-invariance of the distance implies the inverse-invariance of the strain measure, as
\[
\distSO{A}=\dist(A^{-1},\SOn^{-1})=\distSO{A^{-1}}.
\]

As we will see, the requirement for inverse-invariance of the distance is far less restrictive than left-$\GL$- and right $\On$-invariance.

In a search for maximally-symmetric distance functions, we first look for inverse-invariant distances (or metrics) possessing additional symmetries. In Propositions~\ref{prop:inverse_invariant_metrics_Lie}--\ref{prop:inverse_invariant_distances_Lie}
we prove that in any Lie group $G$, a left-invariant metric (distance) is inverse-invariant if and only if it is bi-invariant.  Since there are no bi-invariant metrics/distances on $\GL$, it follows that no left-$\GL$- and inverse-invariant metric/distance exists either.  

In particular, it follows that the distance functions induced by the metrics considered in Sections~\ref{sec:symm_geo} and \ref{sec:geo_dist} are not inverse-invariant. 
That is, the inverse-invariance of the strain measure does not result from the inverse-invariance of the distance, but rather from the left-$\GL$ and right-$\SOn$ invariance of the metrics, as shown in \propref{prop:inverse_invariance_general_Lie_groups} (also observed in ~\cite[Section 3.2 Eq.~(23)]{NEM15}).



There is a systematic way of constructing inverse-invariant distances from arbitrary distances.
Denote the inverse automorphism by $i$. Since $i$ is a diffeomorphism of finite order, 
given any distance $d$ on $\GL$,
it is possible to construct an inverse-invariant distance via symmetrization,
\[
\til d(A,B)=d(A,B)+d(A^{-1},B^{-1}).
\]
It is easy to see that $\til d$ generates the same topology as $d$, and it is of course inverse-invariant.

A similar construction can be carried out for Riemannian metrics on $\GL$. Given any metric $g$, the metric $g + i^*(g)$ is inverse-invariant, and induces the standard topology on $\GL$, as does any Riemannian metric.

In the following subsections, we analyze how these two different methods for generating inverse-invariant distances affect the strain measure.
We will see that if we start from distances/metrics having certain symmetries, then their symmetrizations possess corresponding symmetries as well.

\subsubsection{Inverse-invariant distances}


The next lemma considers a setting that generalizes our treatment of left-$\GL$- and right-$\On$ invariant distance functions.

\begin{lemma}
Let $G$ be a group and let $H \subseteq G$ be a subgroup. Let $d$ be a left-$G$- and right-$H$-invariant distance function on $G$. Let $\til d$ be its symmetrization. Then,
\[
\dist_{\til d}(g,H)=2 \,  \dist_d(g,H).
\]
Moreover, an element $h \in H$ is a  closest element in $H$ to $g$ with respect to $d$ if and only if it is a closest element with respect to $\til d$.
\end{lemma}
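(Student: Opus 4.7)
The plan is to reduce the computation of $\til d$ on pairs $(g,h)$ with $h \in H$ to a multiple of $d$, using the combined left-$G$ and right-$H$ invariance of $d$. Once the identity $\til d(g,h) = 2 d(g,h)$ is established for $h \in H$, both assertions of the lemma follow immediately by taking the infimum over $h \in H$.

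First I would show that for every $g \in G$ and every $h \in H$, one has $d(g^{-1}, h^{-1}) = d(g, h)$. The computation is a two-line application of the assumed invariances: using left-invariance by $g$, one gets $d(g^{-1}, h^{-1}) = d(e, g h^{-1})$, and then right-invariance by $h \in H$ gives $d(e, g h^{-1}) = d(h, g) = d(g,h)$. This is essentially the same manipulation that underlies \propref{prop:inverse_invariance_general_Lie_groups}, but specialized so that the individual pairs $(g,h)$ and $(g^{-1}, h^{-1})$ are directly identified (rather than only their infima over $H$).

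From the identity $d(g^{-1}, h^{-1}) = d(g,h)$ for $h \in H$, I would conclude $\til d(g,h) = d(g,h) + d(g^{-1}, h^{-1}) = 2 d(g,h)$. Taking the infimum over $h \in H$ gives
\[
\dist_{\til d}(g,H) = \inf_{h \in H} \til d(g,h) = 2 \inf_{h \in H} d(g,h) = 2 \dist_d(g,H),
\]
which proves the first claim. The ``moreover'' part then follows at once: since $\til d(g, \cdot) = 2 d(g, \cdot)$ on $H$, the two functions attain their infimum over $H$ at exactly the same elements.

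There is no real obstacle here; the only step that requires care is the manipulation establishing $d(g^{-1}, h^{-1}) = d(g,h)$, where one must invoke left-invariance and right-$H$-invariance in the right order and use crucially that $h \in H$ (so that right-multiplication by $h$ is an isometry of $d$). All other steps are formal.
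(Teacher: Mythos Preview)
Your proof is correct and follows essentially the same approach as the paper: both arguments establish the pointwise identity $d(g,h)=d(g^{-1},h^{-1})$ for $h\in H$ via left-$G$- and right-$H$-invariance (the paper refers back to the proof of \propref{prop:inverse_invariance_general_Lie_groups} for this), deduce $\til d(g,h)=2\,d(g,h)$ on $H$, and take the infimum. The ``moreover'' part is handled identically.
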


\begin{proof}
The assumed symmetries imply $d(g,h)=d(g^{-1},h^{-1})$ for any $g\in G$ and $h \in H$ (see the proof of \propref{prop:inverse_invariance_general_Lie_groups}).
Hence,
\[   
\begin{split}
\dist_{\til d}(g,H) &= \inf_{h \in H} \til d(g,h) \\
&= \inf_{h \in H} d(g,h) + d(g^{-1},h^{-1}) \\
&=  \inf_{h \in H} 2 \, d(g,h) \\
&= 2 \,  \dist_d(g,H).
\end{split}
\]
To prove the second part, note that for $h\in H$, $\til d(g,h) = \dist_{\til d}(g,H)$ if and only if 
$2\, d(g,h) =  2 \, \dist_d(g,H)$.

\end{proof}

The above lemma implies that there is no much interest in symmetrizing left-$G$- and right-$H$-invariant distance functions, since the symmetrizations give rise to essentially identical notions of distance from $H$. 


We turn to analyze symmetrization within the context of intrinsic versus extrinsic Euclidean distances. As above, we provide a slightly more general treatment that considers the relevant symmetries.
Since this setting possesses less symmetries than the one considered above,  we will have to use properties that are specific to the Euclidean distance; in particular, SVD plays an important role. 

\begin{lemma}
Let $G$ be a group and let $H \subseteq G$ be a subgroup. Let $d$ be  a bi-$H$-invariant distance on $G$. Then, its symmetrization $\til d$ is also bi-$H$-invariant.   
\end{lemma}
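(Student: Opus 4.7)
The plan is to verify directly from the definition that $\til d(A,B) = d(A,B) + d(A^{-1},B^{-1})$ is invariant under both left and right multiplication by elements of $H$. The essential observation is that taking inverses swaps the roles of left and right multiplication: if $h_1, h_2 \in H$, then
\[
(h_1 A h_2)^{-1} = h_2^{-1} A^{-1} h_1^{-1},
\]
and since $H$ is a subgroup, $h_1^{-1}, h_2^{-1} \in H$ as well. Thus the right- and left-$H$-multiplications on $(A,B)$ become left- and right-$H$-multiplications on $(A^{-1},B^{-1})$, which remain in the regime where we may invoke the bi-$H$-invariance of $d$.

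Concretely, I would fix $h_1,h_2 \in H$ and $A,B \in G$, and expand
\[
\til d(h_1 A h_2,\, h_1 B h_2) = d(h_1 A h_2,\, h_1 B h_2) + d\bigl((h_1 A h_2)^{-1},\, (h_1 B h_2)^{-1}\bigr).
\]
The first summand equals $d(A,B)$ by the bi-$H$-invariance of $d$. For the second, I would rewrite the inverses as above and apply bi-$H$-invariance once more with $h_2^{-1}$ acting on the left and $h_1^{-1}$ on the right, obtaining $d(A^{-1},B^{-1})$. Summing gives $\til d(A,B)$.

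There is no real obstacle; the statement is essentially a bookkeeping exercise. The only point worth emphasizing in the write-up is the need for $H$ to be a \emph{subgroup} (not merely a subset) so that inversion sends $H$ to itself, which is precisely what allows one to re-apply the bi-$H$-invariance of $d$ to the inverted pair. This same remark explains why the analogous statement holds for the symmetrized Riemannian metric $g + i^*g$: pullback by $i$ exchanges left and right translations by $H$, so bi-$H$-invariance is preserved.
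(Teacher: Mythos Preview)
Your argument is correct and follows the same direct verification as the paper's proof: expand $\til d$ on the translated pair, use that inversion swaps left and right multiplication by elements of $H$, and apply the bi-$H$-invariance of $d$ to each summand. The only cosmetic difference is that the paper treats left- and right-$H$-invariance separately (writing out only the left case and stating that the right case is similar), whereas you handle both simultaneously with a two-sided translation $h_1 A h_2$.
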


\begin{proof}
For every $h \in H, x,y \in G$, 
\[
\til d(hx,hy)=d(hx,hy)+d(x^{-1}h^{-1},y^{-1}h^{-1})=d(x,y)+d(x^{-1},y^{-1}) = \til d(x,y).
\]
Right-$H$-invariance is proved similarly.
\end{proof}

In our context, since both intrinsic and extrinsic Euclidean distances are bi-$\SOn$ invariant, their symmetrizations are also bi-$\SOn$ invariant.
 
By \propref{prop:orthogonal_invariance_dist}, $\tildistSO{A}=\tildistSO{\Sig}$, where $A = U\Sig V^T$ is any SVD of $A$. By the results in Section~\ref{subsec:intrinsic_euc}, $I$ is the closest matrix to both $\Sig$ and $\Sig^{-1}$ with respect to both intrinsic and extrinsic Euclidean distances.

Hence, for every $Q \in \SOn$,
\[
\til d(\Sig,Q)=d(\Sig,Q)+d(\Sig^{-1},Q^{-1}) \ge d(\Sig,I)+d(\Sig^{-1},I)=\til d (\Sig,I),
\]
from which follows that $I$ is the matrix in $\SOn$ that is the closest to $\Sig$ with respect to $\til d$, and
\[ 
\begin{split}
\tildistSO{A} & =\tildistSO{\Sig}= \til d(\Sig,I) = d(\Sig,I)+d(\Sig^{-1},I) \\
&= \dist_d(A,\SOn)+\dist_d(A^{-1},\SOn).
\end{split}
\]
Again, we obtain that the matrix in $\SOn$ that is the closest to $A$ is the orthogonal polar factor of $A$.
 
  
The symmetrization of the Euclidean distance gives a truly different notion of strain measure, as it penalizes equally both expansions and contractions. Thus, it can be considered an improved strain measure. At the same time, it preserves the symmetries pertinent to the Euclidean metric---frame invariance and material isotropy.
%



\subsubsection{Inverse-invariant metrics}

We now turn to the symmetrization of Riemannian metrics on $\GL$. Given any metric $g$, the metric $\til g =g + i^*(g)$ is inverse-invariant.
As proved in \cite{1623388}, the space of inverse-invariant metrics for any Lie group is infinite-dimensional, hence, we have to address the question of finding natural inverse-invariant metrics. We note that in general, the distance function induced by the symmetrized metric $\til g$ is \Emph{not} the symmetrization of the distance function induced by $g$. Hence, the analysis of the previous subsection is not applicable.

We start by showing that the symmetrized metric inherits some of the symmetries of the original metric.

\begin{lemma}
\label{lem:symmetrized_metric_inherits_symmetries}
Let $G$ be a Lie group and let $H \subseteq G$ be a subgroup. Let $g$ be a bi-$H$-invariant metric on $G$. Its symmetrization $\til g$ is also bi-$H$-invariant. 
\end{lemma}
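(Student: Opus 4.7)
The plan is to show separately that each of the two summands in $\til g = g + i^*g$ is bi-$H$-invariant; since a sum of bi-$H$-invariant metrics is again bi-$H$-invariant, this will finish the proof. The first summand is bi-$H$-invariant by hypothesis, so the whole content of the lemma is the bi-$H$-invariance of $i^*g$, where $i:G \to G$ is the inversion map $x \mapsto x^{-1}$.

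The key algebraic identities that drive everything are
\[
i \circ L_h = R_{h^{-1}} \circ i \Textand i \circ R_h = L_{h^{-1}} \circ i \qquad \forall h \in G,
\]
which follow immediately from $(hx)^{-1} = x^{-1}h^{-1}$ and $(xh)^{-1} = h^{-1}x^{-1}$. Using the functoriality of pullback, I would compute
\[
L_h^*(i^*g) = (i \circ L_h)^* g = (R_{h^{-1}} \circ i)^* g = i^*(R_{h^{-1}}^* g),
\]
and then, since $g$ is right-$H$-invariant and $h^{-1} \in H$, the innermost pullback equals $g$, giving $L_h^*(i^*g) = i^*g$. The right-$H$-invariance of $i^*g$ is obtained by the symmetric computation
\[
R_h^*(i^*g) = (i \circ R_h)^* g = (L_{h^{-1}} \circ i)^* g = i^*(L_{h^{-1}}^* g) = i^*g,
\]
using now that $g$ is left-$H$-invariant.

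I do not expect any serious obstacle here: the argument is essentially a one-line naturality calculation once the identities $i \circ L_h = R_{h^{-1}} \circ i$ and $i \circ R_h = L_{h^{-1}} \circ i$ are written down. The only point that deserves a moment's care is noting that $H$ being a subgroup (and not merely a subset) is what allows us to conclude $h^{-1} \in H$, which is what lets the pullbacks under $L_{h^{-1}}$ and $R_{h^{-1}}$ collapse back to $g$. No differentiation-level computation is required if one works at the level of pullbacks; alternatively, one can unwind $(i^*g)_x(X,Y) = g_{x^{-1}}(di_x X, di_x Y)$ pointwise and verify invariance by substitution, but the functorial route is cleaner.
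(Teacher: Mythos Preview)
Your proof is correct and follows essentially the same route as the paper: both hinge on the identity $i \circ L_h = R_{h^{-1}} \circ i$ (and its symmetric counterpart) together with functoriality of pullback and the bi-$H$-invariance of $g$. The only cosmetic difference is that you isolate the bi-$H$-invariance of $i^*g$ as a standalone fact, whereas the paper carries the sum $g + i^*g$ through the computation at once.
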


\begin{proof}
Let $h \in H$. Then 
\[ 
L_h^*(g+i^*g)=L_h^*g + L_h^*(i^*g)=g+(i \circ L_h)^*g.
\]
Since $i \circ L_h = R_{h^{-1}} \circ i$,    
\[  L_h^*(g+i^*g)= g +  (R_{h^{-1}} \circ i)^*g=g + i^*(R_{h^{-1}}^*g)=g+i^*g .
\]
The proof that $R_h^*(g+i^*g)=g+i^*g $ is similar.

%
%
%

\end{proof}

It follows that the symmetrizations of all the metrics considered in sections~\ref{sec:symm_geo} and \ref{subsec:intrinsic_euc}  are bi-$\On$ invariant.

In the remaining part of this section we study the symmetrization of the metrics considered in Section~\ref{sec:symm_geo}.

As mentioned above, the symmetrized metrics, unlike the original metrics, are not left-$\GL$ invariant, hence, our analysis of the geodesics is not applicable. 
However, the symmetrized metrics share  three important properties with the original metrics:
\begin{enumerate}
\item $(\GLp,\til g)$ is complete. 
\item The symmetric and the skew-symmetric matrices are orthogonal with respect to $\til g_I$.
\item $\al(t)=e^{tV}$ is a $\til g$-geodesic for any symmetric matrix $V$.
\end{enumerate}

We start by showing the orthogonality of symmetric and skew-symmetric matrices. 
For any $A \in \symn$ and $B \in \antisym$, 
\[
(i^*g)_I(A,B)=g_I(di_I(A),di_I(B))=g_I(-A,-B)=0 ,
\]
hence
\[
\til g_I(A,B)=g_I(A,B)+(i^*g)_I(A,B) = 0.
\]

We proceed to prove the completeness of $(\GLp,\til g)$. First note that completeness of $g$ implies completeness of $i^*g$, since $i:(\GL,i^*g) \to (\GL,g)$ is an isometry. So, it suffices to prove that if two Riemannian manifolds $(M,g_1)$ and $(M,g_2)$ are complete then so is $(M,g_1+g_2)$.

\begin{lemma}
Let $M$ be a smooth manifold, and let $g_1,g_2$ be Riemannian metrics on $M$. If either $(M,g_1)$ or $(M,g_2)$ is complete, then $(M,g_1+g_2)$ is complete.
\end{lemma}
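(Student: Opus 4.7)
The plan is to reduce completeness to the compactness of closed bounded sets, via the Hopf--Rinow theorem, and then exploit the pointwise inequality $\|v\|_{g_1+g_2} \geq \|v\|_{g_1}$ coming from positive-definiteness of $g_2$. Without loss of generality, assume $(M, g_1)$ is complete. Set $g = g_1 + g_2$.

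First, I would record the basic monotonicity. Pointwise, for every $v \in T_pM$,
\[
\|v\|_{g}^2 = \|v\|_{g_1}^2 + \|v\|_{g_2}^2 \geq \|v\|_{g_1}^2.
\]
Integrating along any piecewise $C^1$ curve $\gamma$ in $M$ yields $L_g(\gamma) \geq L_{g_1}(\gamma)$, and taking infima over curves joining two points gives the key inequality
\[
d_g(x,y) \geq d_{g_1}(x,y), \qquad \forall x,y \in M.
\]

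Next, I would invoke Hopf--Rinow in the direction that says $(M, g)$ is a complete Riemannian manifold if and only if every closed and $d_g$-bounded subset of $M$ is compact. Fix a point $p \in M$ and $r > 0$, and consider the closed $d_g$-ball $\bar B_g(p,r)$. By the inequality above, $\bar B_g(p,r) \subseteq \bar B_{g_1}(p,r)$, and the latter is compact by the assumed completeness of $g_1$ (again via Hopf--Rinow). I would then note that $\bar B_g(p,r)$ is closed in the manifold topology: the function $d_g(p,\cdot)$ is continuous with respect to the manifold topology (any Riemannian distance induces the underlying manifold topology), so its sublevel set $\bar B_g(p,r)$ is closed in $M$. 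A closed subset of a compact set is compact, hence $\bar B_g(p,r)$ is compact. Since an arbitrary closed $d_g$-bounded set sits inside some $\bar B_g(p,r)$, it too is compact as a closed subset of a compact set. This verifies the Hopf--Rinow criterion for $g = g_1 + g_2$.

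The main subtlety, which I expect to be the one place where care is needed, is the justification that $\bar B_g(p,r)$ is closed in the manifold topology (equivalently, in the $d_{g_1}$-topology). This depends on the standard, but nontrivial, fact that the topology induced by any Riemannian distance coincides with the underlying manifold topology; I would cite this rather than reprove it. Beyond that, the argument is a clean two-line application of Hopf--Rinow to the monotonicity $d_g \geq d_{g_1}$, and requires no analysis of the geodesic equation for $g$.
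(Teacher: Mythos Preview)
Your proof is correct and follows essentially the same route as the paper: both derive the monotonicity $d_{g_1+g_2}\ge d_{g_1}$ from the pointwise norm inequality and then apply the Hopf--Rinow characterization (closed and bounded implies compact) to conclude. The paper phrases the final step for an arbitrary closed $(g_1+g_2)$-bounded set rather than for closed balls, which sidesteps the topology remark you flagged, but the content is the same.
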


\begin{proof}
Let $p,q \in M$. For any path $\al$ from $p$ to $q$,
\[
\begin{split}
L_{g_1+g_2}(\al) &= \int \sqrt{g_1(\dot \al(t),\dot \al(t))+ g_2(\dot \al(t),\dot \al(t))} \\
&\ge \int \sqrt{g_1(\dot \al(t),\dot \al(t))} \\
&= L_{g_1}(\al).
\end{split} 
\]
Similarly $L_{g_1+g_2}(\al) \ge L_{g_2}(\al)$. Without loss of generality,  Assume that $(M,g_1)$
 is complete.
For any $p,q \in M$,
\[ 
d^{g_1}(p,q)=\inf_{\al :p\mapsto q}L_{g_1}(\al) \le  \inf_{\al :p\mapsto q}L_{g_1+g_2}(\al) =d^{g_1+g_2}(p,q)
\]
By the Hopf-Rinow theorem, a Riemannian manifold $(M,g)$ is complete if and only if closed and $g$-bounded sets are compact. Let $A \subseteq M$ be a closed and $(g_1+g_2)$-bounded set. Boundedness implies that there exists a point $p \in M$ and a number $R > 0$, such tha $d^{g_1+g_2}(a,p) \le R$ for every $a\in A$.

Since $d^{g_i}(a,p) \le d^{g_1+g_2}(a,p) \le R$, it follows that $A$ is also $g_1$-bounded. Since $(M,g_1)$ is complete, $A$ is compact, hence $g_1+g_2$ is complete as well.
\end{proof}


It remains to show that $\al(t)=e^{tV}$ is a geodesic for symmetric $V$.
We first note that $\al(t)=i(e^{tV})=e^{-tV}$ is a geodesic of $i^*g$, since $i:(\GL,i^*g) \to (\GL,g)$ is an isometry. Reversing time, we get that $\al(t)=e^{tV}$ is a geodesic of both $g$ and $i^*g$. 
Note that every geodesic is parametrized by a parameter proportional to arclength, i.e., its speed $\|\dot \al(t)\|$ is constant. In this particular case, the speeds are the same when measured with respect to both metrics, since $g_I=(i^*g)_I$, hence $\|\dot \al(0) \|$ is independent of the metric chosen. It turns out that in this particular situation, $\al$ is a geodesic with respect to the metric $g+i^*g$.



\begin{lemma}
  \label{lem:geodesic_metric_sum_lemma}
  Let $M$ be a smooth manifold, and let $g_1,g_2$ be Riemannian metrics on $M$. Assume $\be(t)$ is a geodesic for both $g_1$ and $g_2$, and that its speed is the same with respect to both metrics.  Then $\beta(t)$ is also a $(g_1+g_2)$-geodesic.
\end{lemma}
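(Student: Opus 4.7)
The plan is to verify that $\beta$ satisfies the two defining properties of a Riemannian geodesic with respect to $g := g_1 + g_2$: constant speed and local length-minimization. Writing $v$ for the common $g_1$- and $g_2$-speed of $\beta$, constant $g$-speed is immediate, since $\|\dot\beta\|_g^2 = \|\dot\beta\|_{g_1}^2 + \|\dot\beta\|_{g_2}^2 = 2v^2$, which is constant in $t$.

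For local length-minimization, fix $t_0$ in the domain of $\beta$. Because every Riemannian geodesic is locally length-minimizing, one may choose an interval $[a,b] \ni t_0$ small enough that $\beta|_{[a,b]}$ minimizes $g_1$-length and $g_2$-length simultaneously among paths with the same endpoints (intersect the two minimizing intervals). For any competitor path $\alpha$ joining $\beta(a)$ to $\beta(b)$, the elementary pointwise inequality $\sqrt{x^2 + y^2} \ge (x+y)/\sqrt{2}$, applied with $x = \sqrt{g_1(\dot\alpha, \dot\alpha)}$ and $y = \sqrt{g_2(\dot\alpha, \dot\alpha)}$ and then integrated, yields
\[
L_g(\alpha) \,\ge\, \tfrac{1}{\sqrt{2}}\bigl(L_{g_1}(\alpha) + L_{g_2}(\alpha)\bigr) \,\ge\, \tfrac{1}{\sqrt{2}}\bigl(L_{g_1}(\beta|_{[a,b]}) + L_{g_2}(\beta|_{[a,b]})\bigr) \,=\, \sqrt{2}\,v(b-a) \,=\, L_g(\beta|_{[a,b]}),
\]
so that $\beta|_{[a,b]}$ is $g$-length-minimizing. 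Together with constant speed, this makes $\beta$ a $g$-geodesic.

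The equal-speed hypothesis enters precisely in the final equality chain: the pointwise inequality $\sqrt{x^2 + y^2} \ge (x+y)/\sqrt{2}$ is saturated at $\alpha = \beta$ only because $\sqrt{g_1(\dot\beta,\dot\beta)} = \sqrt{g_2(\dot\beta,\dot\beta)} = v$. Without equal speeds the same computation would still produce a valid lower bound on $L_g(\alpha)$, but the bound would be strictly smaller than $L_g(\beta|_{[a,b]}) = \sqrt{2v^2}\,(b-a)$ by strict concavity in $(\sqrt{g_1},\sqrt{g_2})$, so the argument would break. The only mildly delicate point I anticipate is guaranteeing a common minimizing interval $[a,b]$, but this is immediate from the standard fact that every Riemannian geodesic is locally length-minimizing applied separately to $(M,g_1)$ and $(M,g_2)$.
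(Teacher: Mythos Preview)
Your proof is correct and follows essentially the same route as the paper: both use the pointwise inequality $\sqrt{a+b}\ge(\sqrt a+\sqrt b)/\sqrt2$ to bound $L_{g_1+g_2}(\alpha)$ from below by $\tfrac{1}{\sqrt2}(L_{g_1}(\alpha)+L_{g_2}(\alpha))$, then exploit local minimality of $\beta$ for both $g_i$ together with the equal-speed hypothesis to saturate the bound at $\alpha=\beta$. Your version is slightly more explicit about securing a common minimizing interval and about the constant-speed half of the geodesic characterization, but the argument is the same.
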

  
\begin{proof}
The inequality: $\sqrt{a+b} \ge \frac{1}{\sqrt 2}(\sqrt a + \sqrt b) $ implies that for any path $\al$ in $M$,
\beq 
\label{eq:length_inequality}
\begin{split} 
L^{g_1+g_2}(\al) &= \int \sqrt{g_1(\dot \al(t),\dot \al(t))+ g_2(\dot \al(t),\dot \al(t))} \\
& \ge  \frac{1}{\sqrt 2} \cdot \big( \int  \sqrt{g_1(\dot \al(t),\dot \al(t))} +   \int  \sqrt{g_2(\dot \al(t),\dot \al(t))} \big) \\ 
&=  \frac{1}{\sqrt 2} \cdot (L^{g_1}(\al)+L^{g_2}(\al)).
\end{split} 
\eeq
Since $\be$ is a geodesic with respect to both $g_1$ and $g_2$, it is locally length-minimizing with respect to both metrics; for small enough $t$, $L^{g_i}(\be|_{[ 0,t]})=d^{g_i}(\be(0),\be(t))=tc$, for some constant $c$. Let $\al$ be any path connecting $\be(0),\be(t)$. By our assumption, $\sqrt{g_1(\dot \be(t),\dot \be(t))}= \sqrt{g_2(\dot \be(t),\dot \be(t))}=c$.
Note that 
\[ 
\begin{split} 
  L^{g_1+g_2}(\be|_{[0,t]}) &= \int_0^t \sqrt{g_1(\dot \be(t),\dot \be(t))+ g_2(\dot \be(t),\dot \be(t))} \\
  &= \sqrt{2} ct= \frac{1}{\sqrt{2}}\cdot(tc+tc)  \\
&=  \frac{1}{\sqrt 2}\cdot \big(d^{g_1}(\be(0),\be(t))+ d^{g_2}(\be(0),\be(t))\big) \\ 
&\le   \frac{1}{\sqrt 2} \cdot (L^{g_1}(\al)+L^{g_2}(\al))\le L^{g_1+g_2}(\al)
\end{split}, 
\]
where the last inequality uses \eqref{eq:length_inequality}. Thus, 
$\be$ locally minimizes length with respect to $g_1+g_2$, hence it is a geodesic.
\end{proof}

Next, we imitate from Section~\ref{sec:geo_dist} the argument for finding the geodesic distance from $\SOn$. By \lemref{lem:symmetrized_metric_inherits_symmetries}, we can use \propref{prop:orthogonal_invariance_dist} to reduce again the question to diagonal matrices.
Since the derivation of the strain measure uses only the three properties of the metric mentioned above, it works in exactly the same manner for the symmetrized metric.

There is just one delicacy. The proof hinges on the fact that $\al(t)=Qe^{tV}$ is a $\til g$-geodesic for any symmetric $V$ and $Q \in \SOn$ (note that at this stage we don't yet know that $Q=I$).
From \lemref{lem:geodesic_metric_sum_lemma} follows that $e^{tV}$ is a $\til g$-geodesic. Since (\lemref{lem:symmetrized_metric_inherits_symmetries} again) $\til g$ is bi-$\On$-invariant it follows that $\al$ is also a $\til g$ geodesic.
Following the rest of the proof, the only difference is at the final stage, when evaluating the speed $\|\dot \al(0)\|_I$, where $\til g_I$-is scaled by $\sqrt{2}$,
\[
\|V\|_I=\sqrt{g_I(V,V)+(i^*g)_I(V,V)}=\sqrt{2\cdot g_I(V,V)}.
\]
Hence, the strain measure is multiplied by a factor of $\sqrt{2}$.

\appendix
%
%
%
%
%
%
%
%

\section{Calculating the geodesics}
\label{sec:geosesic_eqs}

\subsection{Analysis of the geodesic equations}

Let $g_I$ be an inner-product on $M_n=T_I\GL$ given by the form \eqref{eq:expression_inner_product}, and let $g$ be the Riemannian metric on $\GL$ which is the left-translation of $g_I$.

First, we need the following result.

\begin{proposition}
\label{prop:G1}
For every $X,Y\in T_I\GL$,
\[
\GG(X,[Y,X]) = \frac{\beta - \gamma}{2\beta} \GG([X,X^T],Y).
\]
\end{proposition}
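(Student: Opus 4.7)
The plan is to decompose everything into symmetric and skew-symmetric parts and reduce the identity to a single trace identity via the cyclic property of the trace.

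First I would write $X = S + A$ and $Y = T + B$ with $S = \sym X$, $A = \skew X$, $T = \sym Y$, $B = \skew Y$. A direct expansion shows
\[
[X,X^T] = [S+A, S-A] = 2[A,S],
\]
and since for $A$ skew and $S$ symmetric we have $[A,S]^T = [A,S]$, the bracket $[X,X^T]$ is symmetric. Its trace is zero (trace of any commutator vanishes), so the $\alpha$- and $\gamma$-contributions to $g_I([X,X^T],Y)$ drop out and
\[
g_I([X,X^T], Y) = 2\beta\, \tr([A,S]\,T).
\]

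Next I would expand $[Y,X] = [T,A] + [B,S] + [T,S] + [B,A]$ and classify each term: $[T,A]$ and $[B,S]$ are symmetric, while $[T,S]$ and $[B,A]$ are skew (each claim is a two-line check using transposes). Since $\tr[Y,X]=0$ again kills the $\alpha$-piece,
\[
g_I(X,[Y,X]) = \beta\,\tr\!\bigl(S([T,A]+[B,S])\bigr) + \gamma\,\tr\!\bigl(A([T,S]+[B,A])\bigr).
\]
The cross terms $\tr(S[B,S])$ and $\tr(A[B,A])$ vanish by cyclic invariance of the trace (both reduce to $\tr(S^2B)-\tr(S^2B)$ and $\tr(A^2B)-\tr(A^2B)$ respectively). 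Thus
\[
g_I(X,[Y,X]) = \beta\,\tr(S[T,A]) + \gamma\,\tr(A[T,S]).
\]

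The remaining step is the cyclic identity $\tr(X[Y,Z]) = \tr(Y[Z,X]) = \tr(Z[X,Y])$, which I would verify in one line by expanding both sides and using cyclicity. Applied to our expression, it gives
\[
\tr(A[T,S]) = \tr(T[S,A]) = -\tr(T[A,S]) = -\tr(S[T,A]),
\]
where the last equality is another application of the same identity. Substituting,
\[
g_I(X,[Y,X]) = (\beta-\gamma)\,\tr(S[T,A]) = (\beta-\gamma)\,\tr([A,S]\,T),
\]
using the cyclic identity once more. Comparing with the earlier formula for $g_I([X,X^T],Y)$ yields the desired equality with factor $(\beta-\gamma)/(2\beta)$.

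The routine obstacle is purely bookkeeping: keeping track of which of the four bracket-terms in $[Y,X]$ is symmetric and which is skew, and then organizing the four relevant traces $\tr(S[T,A])$, $\tr(S[B,S])$, $\tr(A[T,S])$, $\tr(A[B,A])$ so that the right two vanish and the remaining two combine with coefficients $\beta$ and $-\gamma$. Once the decomposition and cyclic identity are in hand, no clever step is needed.
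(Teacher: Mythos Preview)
Your argument is correct and is essentially the same approach as the paper's: both proofs reduce the identity to trace manipulations exploiting the symmetric/skew decomposition and cyclicity of the trace. The only difference is organizational---the paper expands $\tr\bigl((X+X^T)(YX-XY+X^TY^T-Y^TX^T)\bigr)$ directly, whereas you name $S,A,T,B$ upfront and classify the four bracket terms in $[Y,X]$ before pairing, which avoids the eight-term expansion but leads to the same computation.
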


\begin{proof}
Note first that
\[
  \GG(X,[Y,X]) = \beta\tr(\sym X\,\sym [Y,X]) + \gamma \tr(\skew X\,\skew [Y,X]),
\]
and
\[
\GG([X,X^T],Y) = \beta\tr(\sym [X,X^T]\,\sym Y).
\]

Now,
\[
\begin{split}
\tr(\sym X\,\sym [Y,X]) &= \frac14 \tr((X+X^T)(YX-XY + X^TY^T - Y^TX^T)) \\
&\hspace{-3cm}= \frac12\tr([X,X^T]Y) 
= \frac{1}{2\beta} \GG([X,X^T],Y),
\end{split}
\]
and
\[
\begin{split}
\tr(\skew X\,\skew [Y,X]) &= \frac14 \tr((X-X^T)(YX-XY - X^TY^T + Y^TX^T)) \\
&\hspace{-3cm}= -\frac12 \tr( [X,X^T]Y) 
= -\frac{1}{2\beta} \GG([X,X^T],Y),
\end{split}
\]
hence
\[
\GG(X,[Y,X]) = \frac{\beta - \gamma}{2\beta} \GG([X,X^T],Y).
\]
\end{proof}

%

Let $\{\fraka_\alpha\}$ be a $\GG$-orthonormal basis for $(T_I\GL,g)$.
Since $g$ is left-invariant, $e_\al=d(L_g)_e(\fraka_\al)$  is an orthonormal frame for $(T\GL,g)$.
Let $\{\cof\alpha\}$ be the orthonormal co-frame,
\[
\cof\alpha|_A = \G_A(e_\al|_A,\cdot) = \G_A(i_A(A\fraka_\alpha),\cdot),
\]
which implies that
\[
\cof\alpha|_A(i_A(AX)) = \GG(\fraka_\alpha,X).
\]

The Riemannian connection is represented by an anti-symmetric matrix of 1-forms, $\{\conn\alpha\beta\}$, defined by
\[
\nabla_{e_\alpha} e_\beta = \conn\gamma\beta(e_\alpha)\,e_\gamma, 
\]
and satisfying \Emph{Cartan's first structural equation},
\[
d\cof\alpha + \conn\alpha\beta\wedge\cof\beta = 0.
\]
Noting that,
\[
\begin{split}
d\cof\alpha(e_\mu,e_\nu) &=  (\cof\alpha(e_\nu))\,e_\mu -(\cof\alpha(e_\mu))\, e_\nu - \cof\alpha([e_\mu,e_\nu]) \\
&= \del^\al_\nu\, e_\mu  - \del^\al_\mu\,e_\nu - \cof\alpha([e_\mu,e_\nu])  - \G(e_\alpha,[e_\mu,e_\nu]) ,
\end{split}
\]
and that
\beq
\G(e_\alpha,[e_\mu,e_\nu])  = 
\GG(\fraka_\alpha,[\fraka_\mu,\fraka_\nu]),
\label{eq:explain_why}
\eeq
we get
\[
-\GG(\fraka_\alpha,[\fraka_\mu, \fraka_\nu])
 + \conn\alpha\nu(e_\mu) - \conn\alpha\mu(e_\nu) = 0.
\]
Equality \eqref{eq:explain_why} holds because Lie brackets of left-invariant vector fields are left-invariant, together with the well-known fact that the Lie algebra commutator of $\GL$ is merely the standard matrix commutator in $M_n$, i.e $[e_\mu,e_\nu](e)=[\fraka_\mu,\fraka_\nu]$ \cite[p. 193]{Lee12}.

Rotating the indexes,
\[
\begin{gathered}
-\GG(\fraka_\mu,[\fraka_\alpha, \fraka_\nu])
 + \conn\mu\nu(e_\alpha) - \conn\mu\alpha(e_\nu) = 0 \\
-\GG(\fraka_\nu,[\fraka_\mu, \fraka_\alpha])
 + \conn\nu\alpha(e_\mu) - \conn\nu\mu(e_\alpha) = 0.
\end{gathered}
\]
Adding the three equations and renaming the indexes,
\[
2\conn\gamma\beta(e_\alpha) =
\GG(\fraka_\alpha,[\fraka_\gamma, \fraka_\beta])
+\GG(\fraka_\gamma,[\fraka_\alpha, \fraka_\beta])
+\GG(\fraka_\beta,[\fraka_\gamma, \fraka_\alpha]).
\]
That is,
\[
\nabla_{e_\alpha} e_\beta = \Gamma^\gamma_{\alpha\beta}\,e_\gamma, 
\]
where 
\[
\Gamma^\gamma_{\alpha\beta} =  \half\brk{
\GG(\fraka_\alpha,[\fraka_\gamma, \fraka_\beta])
+\GG(\fraka_\gamma,[\fraka_\alpha, \fraka_\beta])
+\GG(\fraka_\beta,[\fraka_\gamma, \fraka_\alpha])}
\]
are constant coefficients. 

Consider now a geodesic curve, $\gamma:I\to\GL$, where
\[
\dot{\gamma}(t) = p^\alpha(t)\, e_\alpha|_{\gamma(t)}.
\]
The geodesic equation for the coefficients $p^\alpha(t)$ is
\[
\dot{p}^\gamma(t) + \Gamma^\gamma_{\alpha\beta} \, p^\alpha(t)\, p^\beta(t) = 0.
\]
Exploiting the symmetries of $\Gamma$ and the symmetry of the geodesic equation,
\[
\dot{p}^\gamma(t) + \GG(\fraka_\alpha,[\fraka_\gamma, \fraka_\beta])  \, p^\alpha(t)\, p^\beta(t) = 0.
\]
This is a set of $n^2$ quadratic equations with constant coefficients. 

Multiplying this equation by $\fraka_\gamma$, setting $X(t) = p^\alpha(t)\fraka_\alpha$, which is a curve in $T_I\GL$,
\[
\dot{X}(t) + \GG(X(t),[\fraka_\gamma, X(t)])  \fraka_\gamma = 0,
\]
which by  \propref{prop:G1},
\[
\begin{split}
0 &= \dot{X} + \GG(X,[\fraka_\gamma, X])  \fraka_\gamma 
= \dot{X} + \frac{\beta - \gamma}{2\beta}\GG([X,X^T],\fraka_\gamma)  \fraka_\gamma,
\end{split}
\]
namely,
\beq
\dot{X} = \kappa (X^TX - XX^T),
\label{eq:geodX}
\eeq
where
\[
  \kappa = \frac{\beta - \gamma}{2\beta}.
\]
Equation \eqref{eq:geodX} is an ordinary differential system in the vector space $M_n$.

The geodesic $\gamma:I\to\GL$ is related to $X(t)$ via,
\beq
\dot{\gamma}(t) = i_{\ga(t)}(\gamma(t),\gamma(t) X(t)).
\label{eq:geodGamma}
\eeq

\subsection{Solution of geodesic equations}

The factor $(\beta - \gamma)/2\beta$ has for effect to rescale time.
We start by ignoring it.

\begin{proposition}
The solution to 
\[
\dot{X} = X^T X - X X^T
\qquad
X(0) = X_0,
\]
where $X:I\to M_n$
is
\[
X(t) = \exp(t(X_0^T - X_0)) X_0 \exp(t(X_0 - X_0^T)).
\]
\end{proposition}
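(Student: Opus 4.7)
The plan is to verify directly that the proposed $X(t)=e^{tA}X_0 e^{-tA}$, with $A:=X_0^T-X_0\in\antisym$, satisfies the ODE and the initial condition, and then invoke uniqueness of solutions to a smooth ODE on the vector space $M_n$ (Picard-Lindel\"of). The initial condition is immediate since $\exp(0)=I$.

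For the verification, the first observation is that $A$ is anti-symmetric, hence $A^T=-A$ and $(e^{tA})^T=e^{-tA}$. Differentiating the conjugation in $t$ (using that $A$ commutes with $e^{tA}$) gives the easy identity
\[
\dot X(t) = A\,X(t) - X(t)\,A = [A,X(t)].
\]
The task is therefore to show that $[A,X(t)] = X(t)^T X(t) - X(t)\,X(t)^T$ for all $t$.

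The key step, and the one carrying the structural content, is the conservation law
\[
X(t)^T - X(t) = X_0^T - X_0 = A \qquad \text{for all } t,
\]
i.e.\ conjugation by an anti-symmetric exponential preserves the anti-symmetric part of $X_0$. This follows from the explicit computation
\[
X(t)^T = (e^{tA}X_0 e^{-tA})^T = e^{-tA^T} X_0^T e^{tA^T} = e^{tA} X_0^T e^{-tA},
\]
so that $X(t)^T - X(t) = e^{tA}(X_0^T-X_0)e^{-tA} = e^{tA} A e^{-tA} = A$ since $A$ commutes with its own exponential.

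Once this is established, the verification collapses: substituting $X(t)^T = X(t) + A$ into the right-hand side of the ODE yields
\[
X^T X - X X^T = (X+A)X - X(X+A) = AX - XA = \dot X,
\]
as required. There is no genuine obstacle here; the only point worth stressing is the role of the anti-symmetry of $A$ (guaranteeing the transpose identity $(e^{tA})^T=e^{-tA}$ and hence the conservation law), which is what makes the short conjugation formula solve what at first sight looks like a non-trivial quadratic ODE. Uniqueness then identifies $X(t)$ with the true solution on its maximal interval of existence, and since the formula is defined on all of $\R$, it is in fact global.
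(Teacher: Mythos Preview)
Your proof is correct and follows essentially the same approach as the paper: direct verification that the proposed formula satisfies the ODE, using the anti-symmetry of $A=X_0^T-X_0$ to compute $X(t)^T$. The paper differentiates to obtain $\dot X = e^{tA}(X_0^TX_0-X_0X_0^T)e^{-tA}$ and then inserts $e^{-tA}e^{tA}=I$ to recognize this as $X^TX-XX^T$; you instead organize the same computation through the conservation law $X(t)^T-X(t)\equiv A$, which is a clean way to see why the quadratic right-hand side collapses to the linear commutator $[A,X]$. Your explicit mention of Picard--Lindel\"of for uniqueness is a small completeness bonus the paper leaves implicit.
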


\begin{proof}
Clearly, the initial conditions are satisfied. Differentiating with respect to $t$ we get
\[
\begin{split}
\dot{X}(t) &= \exp(t(X_0^T - X_0)) (X_0^T - X_0)X_0 \exp(t(X_0 - X_0^T)) \\
&\,\,\,+
\exp(t(X_0^T - X_0)) X_0(X_0 - X_0^T) \exp(t(X_0 - X_0^T)) \\
&= \exp(t(X_0^T - X_0)) (X_0^TX_0 - X_0X_0^T) \exp(t(X_0 - X_0^T)).
\end{split}
\]
It only remains to insert $\exp(t(X_0 - X_0^T)) \exp(t(X_0^T - X_0))$ inside the products in the middle term to get
\[
\dot{X}(t) = X^T(t) X(t) - X(t) X^T(t).
\]
\end{proof}

Please note that this exponential is the ``standard" matrix exponential, i.e.,  the one obtained from integral curves of left-invariant vector fields. It is not the exponential map of the $\G$-geodesics.

\begin{corollary}
The solution to \eqref{eq:geodX} is
\[
X(t) = \exp(\kappa t(X_0^T - X_0)) X_0 \exp(\kappa t(X_0 - X_0^T)).
\]
\end{corollary}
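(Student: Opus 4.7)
The plan is to reduce to the preceding proposition by a simple rescaling of time. Equation \eqref{eq:geodX} differs from the equation $\dot X = X^T X - X X^T$ solved in the proposition only by an overall constant factor $\kappa$ on the right-hand side, so the two flows should differ only by a reparametrization of their parameter.

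Concretely, I would set $Y(s) = X(s/\kappa)$ (assuming $\kappa \neq 0$; if $\kappa = 0$ the solution is just $X(t) \equiv X_0$, which matches the claimed formula since both exponentials degenerate to $I$). A direct chain-rule calculation gives $Y'(s) = (1/\kappa)\dot X(s/\kappa) = Y(s)^T Y(s) - Y(s) Y(s)^T$, with $Y(0) = X_0$. The proposition then yields $Y(s) = \exp(s(X_0^T - X_0))\, X_0 \, \exp(s(X_0 - X_0^T))$, and substituting $s = \kappa t$ produces the formula claimed for $X(t)$.

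Alternatively, one can bypass the change of variables and simply verify the stated formula by direct differentiation: the computation is identical to the one in the proposition, except that each exponential carries a prefactor $\kappa$, which upon differentiation pulls out a $\kappa$ and reproduces $\dot X = \kappa(X^T X - X X^T)$; initial conditions $X(0) = X_0$ are immediate since both matrix exponentials equal $I$ at $t = 0$. Uniqueness of solutions to the smooth ODE \eqref{eq:geodX} then ensures this is the unique solution.

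There is no real obstacle here---the corollary is essentially a one-line consequence of the proposition. The only minor subtlety worth mentioning is the degenerate case $\kappa = 0$ (i.e.\ $\beta = \gamma$), which must be handled separately but trivially, as noted above.
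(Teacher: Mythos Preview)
Your proposal is correct and matches the paper's approach: the paper explicitly remarks, just before the proposition, that the factor $\kappa = (\beta-\gamma)/2\beta$ ``has for effect to rescale time,'' and then states the corollary without proof. Your time-rescaling argument (and the equivalent direct verification) is exactly what is intended.
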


\begin{proposition}
Let $\gamma:I\to\GL$ be the $\G$-geodesic,
\[
  \dot{\gamma}(t) = i_{\ga(t)}(\gamma(t),\gamma(t) X(t)).
\]
satisfying the initial conditions
\[
\gamma(0) = e
\Textand
\dot{\gamma}(0) = X_0.
\]
Then,
\[
\gamma(t) =  \exp((1-\kappa)t X_0 + \kappa t X_0^T) \,\exp(\kappa t (X_0-X_0^T)).
\]
\end{proposition}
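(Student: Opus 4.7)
The plan is to verify directly that the proposed formula satisfies the first-order ODE $\dot{\gamma}(t) = \gamma(t)X(t)$ (with $\gamma(0)=I$) together with the explicit solution for $X(t)$ obtained in the preceding corollary, namely $X(t) = \exp(\kappa t(X_0^T-X_0))\,X_0\,\exp(\kappa t(X_0-X_0^T))$. By uniqueness of ODE solutions, matching both the initial condition and the ODE will finish the proof.

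To carry this out, I would introduce the shorthand $A := (1-\kappa)X_0 + \kappa X_0^T$ and $B := \kappa(X_0 - X_0^T)$, so that the candidate is $\gamma(t) = \exp(tA)\exp(tB)$. The first sanity check is that $A + B = X_0$, which is immediate by expansion; this gives $\gamma(0)=I$ and $\dot{\gamma}(0) = A + B = X_0$, matching the required initial data.

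Next, I would differentiate the product: since $A$ commutes with $\exp(tA)$,
\[
\dot{\gamma}(t) = A\exp(tA)\exp(tB) + \exp(tA)\,B\,\exp(tB).
\]
The goal is to show that this equals $\exp(tA)\exp(tB)\,X(t)$. Left-multiplying the desired identity by $\exp(-tA)$ reduces the claim to
\[
(A+B)\exp(tB) = \exp(tB)\,X(t),
\]
i.e.\ to $X(t) = \exp(-tB)(A+B)\exp(tB) = \exp(-tB)\,X_0\,\exp(tB)$. But $-tB = \kappa t(X_0^T-X_0)$, so this is precisely the explicit formula for $X(t)$ from the previous corollary. Hence the ODE \eqref{eq:geodGamma} holds for all $t$.

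There is no real obstacle here; the only subtlety is to observe that the apparent noncommutativity in the product $\exp(tA)\exp(tB)$ is exactly what is needed to reproduce the conjugation structure of $X(t)$. Once the bookkeeping $A+B = X_0$ is noted, the verification reduces to a one-line algebraic identity. By the uniqueness theorem for first-order ODEs on the Lie group $\GL$, the unique geodesic through $I$ with initial velocity $X_0$ must be given by the stated formula.
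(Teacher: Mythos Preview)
Your proof is correct and follows essentially the same approach as the paper: both verify the initial condition and then differentiate the candidate $\gamma(t)=\exp(tA)\exp(tB)$ to check that $\dot\gamma(t)=\gamma(t)X(t)$, reducing everything to the identity $X(t)=\exp(-tB)X_0\exp(tB)$ from the preceding corollary. Your introduction of the shorthand $A,B$ with the observation $A+B=X_0$ makes the bookkeeping slightly more transparent than the paper's version, but the argument is the same.
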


\begin{proof}
Clearly, the initial conditions are satisfied. Differentiating with respect to $t$ we get
\[
\begin{split}
\dot{\gamma}(t) &= (T\gamma)_t(\partial_t) \\
&=(T\gamma)_t([t+s]) \\
&= [\gamma(t+s)] \\
&= [\gamma(t) + \exp((1-\kappa)t X_0 +\kappa t X_0^T) X_0 \exp(\kappa t (X_0-X_0^T)) \, s] \\
&= i_{\ga(t)}\brk{\gamma(t), \exp((1-\kappa)t X_0 +\kappa t X_0^T) X_0 \exp(\kappa t (X_0-X_0^T))} \\
&= i_{\ga(t)}(\gamma(t),\gamma(t) X(t)).
\end{split}
\]
\end{proof}

\section{inverse-invariant metrics on Lie groups} 

\begin{proposition}
\label{prop:inverse_invariant_metrics_Lie}
Let $G$ be a Lie group. A left- (or right-)invariant metric on $G$ is inverse-invariant if and only if it is bi-invariant.
\end{proposition}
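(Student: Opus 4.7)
The plan is to reduce the equivalence to a single clean observation: inversion intertwines left and right translations, so pulling back a one-sided invariant metric by inversion swaps the side. More precisely, the identity $(xa)^{-1} = a^{-1}x^{-1}$ yields $i \circ R_a = L_{a^{-1}} \circ i$, and the symmetric identity $i \circ L_a = R_{a^{-1}} \circ i$. Once these are exploited, both directions of the theorem fall out almost immediately.

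Step 1 (key lemma). I would first prove that if $g$ is left-invariant, then $i^*g$ is right-invariant (and symmetrically, that if $g$ is right-invariant then $i^*g$ is left-invariant). Starting from the definition
\[
(R_a^*(i^*g))_x(U,V) = g_{(xa)^{-1}}\bigl((d(i\circ R_a))_x U,\,(d(i\circ R_a))_x V\bigr),
\]
the intertwining relation gives $(d(i\circ R_a))_x = (dL_{a^{-1}})_{x^{-1}} \circ di_x$. Left-invariance of $g$ then cancels the factor $(dL_{a^{-1}})_{x^{-1}}$, leaving exactly $(i^*g)_x(U,V)$. Thus $R_a^*(i^*g) = i^*g$, i.e., $i^*g$ is right-invariant.

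Step 2 (only if). Suppose $g$ is left-invariant and inverse-invariant, so $g = i^*g$. By Step 1 the right-hand side is right-invariant, hence so is $g$, hence $g$ is bi-invariant. If instead $g$ is right-invariant and inverse-invariant, the symmetric version of Step 1 (using $i \circ L_a = R_{a^{-1}}\circ i$) shows that $i^*g$ is left-invariant, so $g$ is left-invariant, and again bi-invariant.

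Step 3 (if). Assume $g$ is bi-invariant. By Step 1, $i^*g$ is right-invariant, while $g$ itself is right-invariant. Since $di_e = -\operatorname{id}_{T_eG}$, the quadratic nature of a metric gives $(i^*g)_e = g_e$. Two right-invariant metrics agreeing at the identity must coincide everywhere on $G$, hence $i^*g = g$. The only real bookkeeping is in Step 1 — making sure the differential identity together with one-sided invariance really does collapse to the required equation — and this is the step I would expect to be the main (though still routine) obstacle.
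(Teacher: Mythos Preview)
Your proof is correct and is essentially the same as the paper's: both hinge on the intertwining identity $i \circ L_a = R_{a^{-1}} \circ i$ (the paper writes it as $\inv = R_{s^{-1}}\circ \inv \circ L_{s^{-1}}$) together with $di_e = -\operatorname{id}$. The paper phrases the consequence pointwise at the level of differentials, while you package it as the global statement ``$g$ left-invariant $\Rightarrow$ $i^*g$ right-invariant,'' but the underlying argument is identical.
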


\begin{proof}
Note that $\inv = R_{s^{-1}}\circ \inv \circ  L_{s^{-1}}$, 
\[ 
R_{s^{-1}}\circ \inv \circ  L_{s^{-1}}(g)=R_{s^{-1}}\circ \inv (s^{-1}g) =R_{s^{-1}}(g^{-1}s) = g^{-1}.   
\]
By the chain rule,
\[ (
d\inv)_s = (dR_{s^{-1}})_e \circ (d\inv)_e \circ (dL_{s^{-1}})_s. 
\]
Since $(d\inv)_e:T_eG \to T_eG$ is the additive inverse operation $(v \mapsto -v)$,
\[  
(d\inv)_s = - (dR_{s^{-1}})_e \circ (dL_{s^{-1}})_s.    
\]
It follows at once that a bi-invariant metric is inverse-invariant.

Conversely, assume the metric is both left- and inverse-invariant. 
Then $\forall s \in G$ , $(dR_s)_e$ is an isometry, hence the metric is right-invariant as well.
\end{proof}


\begin{proposition}
\label{prop:inverse_invariant_distances_Lie}
Let $G$ be a Lie group. A left- (or right-)invariant distance function $d$ on $G$ is inverse-invariant if and only if it is bi-invariant.
\end{proposition}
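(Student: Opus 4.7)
The plan is to imitate the proof of \propref{prop:inverse_invariant_metrics_Lie}, but since we no longer have a tangent-space structure available, I will work directly at the level of group elements and the distance axioms. Everything reduces to elementary manipulations using left-invariance, right-invariance, inverse-invariance, and the symmetry $d(x,y)=d(y,x)$ built into the definition of a distance.

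For the direction bi-invariance $\Rightarrow$ inverse-invariance, given $x,y\in G$ I would first left-translate by $x$ to get $d(x^{-1},y^{-1})=d(e,xy^{-1})$, then right-translate by $y$ to get $d(y,xy^{-1}\cdot y)=d(y,x)$, and finally use symmetry of $d$ to conclude $d(x^{-1},y^{-1})=d(x,y)$.

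For the converse, I assume left-invariance and inverse-invariance and want to derive right-invariance. For $s,x,y\in G$, apply inverse-invariance to $d(xs,ys)$ to obtain $d(s^{-1}x^{-1},s^{-1}y^{-1})$, then cancel the common left factor $s^{-1}$ using left-invariance to obtain $d(x^{-1},y^{-1})$, and finally apply inverse-invariance once more to reach $d(x,y)$. Chaining these gives $d(xs,ys)=d(x,y)$, i.e.\ right-invariance; combined with the assumed left-invariance, this gives bi-invariance. The argument in the right-invariant case is strictly analogous, swapping the roles of $L$ and $R$.

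I do not expect any real obstacle: unlike the Riemannian version, no computation of differentials or use of $(d\inv)_e=-\mathrm{id}$ is needed, and the identity $\inv=R_{s^{-1}}\circ\inv\circ L_{s^{-1}}$ is encoded directly in the manipulation $(xs)^{-1}=s^{-1}x^{-1}$. The only point worth flagging is that the first direction uses symmetry of the distance function, which we take as part of the definition of a distance.
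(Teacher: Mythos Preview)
Your proof is correct and is essentially the same as the paper's. The paper phrases the converse direction via the map identity $\inv=R_{s^{-1}}\circ\inv\circ L_{s^{-1}}$ (so $R_{s^{-1}}=\inv\circ L_s\circ\inv$ is a composition of isometries), while you unpack this at the element level via $(xs)^{-1}=s^{-1}x^{-1}$; for the bi-invariant $\Rightarrow$ inverse-invariant direction, both arguments use one left translation, one right translation, and the symmetry of $d$, in slightly different orders.
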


\begin{proof}
Assume $d$ is left- and inverse-invariant. Since $\inv = R_{s^{-1}}\circ \inv \circ  L_{s^{-1}}$, $d$ is also right-invariant.

Conversely, assume $d$ is bi-invariant. Then,
\[ 
d(x,y)=d(1,x^{-1}y)=d(y^{-1},x^{-1})=d(x^{-1},y^{-1}). 
\]
\end{proof}

In fact, \propref{prop:inverse_invariant_distances_Lie} implies \propref{prop:inverse_invariant_metrics_Lie} by virtue of the Myers-Steenrod theorem whereby every isometry of $d$ is a Riemannian isometry.




{\bfseries Acknowledgements}
We are grateful to Patrizio Neff for clarifying the physical motivation behind the invariance assumptions and for pointing out relevant historic references.
We are very much indebted to Amotz Oppenheim, Pavel Giterman and Amitay Yuval for useful discussions.
We also thank Pietro Majer for useful comments on mathoverflow.

This research was partially supported by the Israel-US Binational Foundation (Grant No. 2010129), by the Israel Science Foundation (Grant No. 661/13), and by a grant from the Ministry of Science, Technology and Space, Israel and the Russian Foundation for Basic Research, the Russian Federation.

\bibliographystyle{unsrt}
\bibliography{./MyBibs}

\end{document}